\documentclass[12pt,a4paper]{amsart}
\usepackage{mathrsfs}
\usepackage{color}
\usepackage{ulem}
\newtheorem{theo+}              {Theorem}           [section]
\newtheorem{prop+}  [theo+]     {Proposition}
\newtheorem{coro+}  [theo+]     {Corollary}
\newtheorem{lemm+}  [theo+]     {Lemma}
\newtheorem{exam+}  [theo+]     {Example}
\newtheorem{rema+}  [theo+]     {Remark}
\newtheorem{defi+}  [theo+]     {Definition}

\newenvironment{theorem}{\begin{theo+}}{\end{theo+}}
\newenvironment{proposition}{\begin{prop+}}{\end{prop+}}
\newenvironment{corollary}{\begin{coro+}}{\end{coro+}}
\newenvironment{lemma}{\begin{lemm+}}{\end{lemm+}}

\usepackage{amsthm}
\theoremstyle{plain} \theoremstyle{remark}
\newtheorem{remark}{Remark}

\def \r{\mbox{${\mathbb R}$}}
\def\E{/\kern-1.0em \equiv }

\evensidemargin  5mm \oddsidemargin  5mm \textwidth  145mm
\textheight 209mm

\linespread{1.1}

\thispagestyle{empty}
\title{Biharmonic hypersurfaces in product spaces}
\author{Yu Fu$^{*}$, Shun Maeta$^{**}$  and Ye-Lin Ou$^{***}$ }

\address{School of Mathematics, \newline\indent Dongbei University of Finance and Economics,
\newline\indent Dalian 116025,
\newline\indent China.
\newline\indent
E-mail:yufu@dufe.edu.cn and yufudufe@gmail.com}

\address{Department of Mathematics, \newline\indent Shimane University,\newline\indent Matsue, 690-8504,
\newline\indent Japan.
\newline\indent
E-mail:shun.maeta@gmail.com and maeta@riko.shimane-u.ac.jp}

\thanks{
\noindent $^{*}$The first named author is supported by NSFC (No. 11601068).
\newline\indent $^{**}$The second named author is supported partially by the Grant-in-Aid for Young Scientists(B),
 No.15K17542, Japan Society for the Promotion of Science, and  partially by JSPS Overseas Research
Fellowships 2017-2019 No.70. The work was done while he was visiting
the Department of Mathematics of Texas A $\&$ M University-Commerce
as a Visiting Scholar and he is grateful to the department and the
university for the hospitality he had received during the visit.
\newline\indent $^{***}$ The third named author is supported by a grant
from the Simons Foundation ($\#427231$, Ye-Lin Ou)}

\address{Department of
Mathematics,\newline\indent Texas A $\&$ M University-Commerce,
\newline\indent Commerce TX 75429,\newline\indent USA.\newline\indent
E-mail:yelin.ou@tamuc.edu}
%\thanks{$^*$ Supported by Texas A $\&$ M University-Commerce Faculty Development Program (2008)}

\begin{document}

\title[Biharmonic hypersurfaces in product spaces]{Biharmonic hypersurfaces in a product space $L^m\times \mathbb{R}$}
\subjclass[2010]{58E20, 53C12} \keywords{Biharmonic hypersurfaces,
angle function, product spaces, rotation hypersurfaces, totally umbilical hypersurfaces.}
\date{06/04/2019}
 \maketitle

\section*{Abstract}
\begin{quote}
{\footnotesize In this paper, we study biharmonic hypersurfaces in a
product $L^m\times \mathbb{R}$ of an Einstein space $L^m$ and a real
line $\mathbb R$. We prove that a biharmonic hypersurface  with
constant mean curvature in such a product  is either minimal or a vertical cylinder generalizing a result of \cite{OW} and \cite{FOR}. We
derived the  biharmonic equation for hypersurfaces in $S^m\times \mathbb{R}$ and $H^m\times \mathbb{R}$ in terms of the angle function of the hypersurface,  and use it to obtain some classifications of biharmonic hypersurfaces in such spaces. These include classifications of biharmonic hypersurfaces which are totally umbilical or semi-parallel for $m\ge 3$, and some classifications  of biharmonic surfaces  in $S^2\times \mathbb{R}$ and $H^2\times \mathbb{R}$ which are constant angle or belong to certain classes of rotation surfaces. }
\end{quote}

\section{Introduction }
The study of the geometry of the hypersurfaces in the conformally flat spaces $S^m\times \r$ and $H^m\times \r$ has been receiving a growing attention since 2002. It was initiated  by U. Abresch and H. Rosenberg in \cite{AR} and \cite{Ro} where they studied minimal and constant mean curvature surfaces in $S^2\times \r$ and $H^2\times \r$.

A  fundamental theorem for the existence of hypersurfaces in $S^m\times \r$ and $H^m\times \r$ was proved by B. Daniel in \cite{Da}.

The existence and some classifications of surfaces of constant Gauss curvature in $S^2\times \r$ and $H^2\times \r$ were studied in \cite{AEG} and \cite{AEG2} whilst for $m\ge 3$, \cite{MT} gave a complete classification of constant sectional curvature hypersurfaces in $S^m\times \r$ and $H^m\times \r$. An interesting consequence of the classification in \cite{MT} is that for $m\ge 4$, a constant sectional curvature hypersurface (even a local one) in $S^m\times \r$ (resp. $H^m\times \r$) has to be a rotation hypersurface with constant sectional curvature $c \ge1$ (resp, $c\ge -1)$, and for $m = 3$, there is exactly one class of nonrotational hypersurfaces of $S^3\times \r$ and $H^3\times \r$ with constant sectional curvature. Each such hypersurface in
this class in $S^3\times \r$ (resp. $H^3\times \r$) has constant sectional curvature  $c\in(0, 1)$ (resp. $c \in (-1, 0)$), and is constructed in an explicit way
by means of a family of parallel flat surfaces in $S^3$ (resp. $H^3$).

Classification of totally umbilical, parallel and semi-parallel
hypersurfaces of $H^m\times \r$ and $S^m\times \r$ were done in in
\cite{CKV} and \cite{VV}, respectively. An interesting consequence
of these classifications shows that, unlike the situation in space
form, a totally umbilical hypersurface $H^m\times \r$ or $S^m\times
\r$ may not be parallel. A complete classification of totally
umbilical submanifolds with any codimension in $S^m\times \r$ was
obtained by B. Mendonca and R. Tojeiro in \cite{MT14}.

 Here, we recall that a hypersurface with the second fundamental form $b$ is said to be {\it pseudo-parallel} if $R\cdot b=\phi(X\wedge Y)b$ for some real-valued function $\phi$ on the hypersurface, where $R\cdot b$ is a $(0,4)$-tensor
field defined by
\begin{align}\notag
 (R\cdot b)(X, Y, U, V) =& (R(X, Y)\cdot b)(U, V) \\\notag
 = &-b(R(X, Y)U, V) - b(U, R(X, Y)V).
\end{align}
A pseudo-parallel hypersurface with $\phi\equiv 0$ is said to be {\it semi-parallel}, i.e., it satisfies the condition $R\cdot b = 0$. Recalling that a hypersurface is {\it parallel} means $\nabla b = 0$, we clearly have the following inclusion relations:
\begin{center}{\scriptsize
$ \{ {\rm Parallel\; hypersurfaces}\} \subset  \{ {\rm Semi-parallel \;hypersurfaces}\} \subset \{ {\rm Pseudo-parallel\; hypersurfaces}\}$}.
\end{center}

Constant angle surfaces in $S^2\times\mathbb R$ and $H^2\times\mathbb R$ were studied and characterized in \cite{DFVV}, \cite{DM}, and \cite{DFV09, DMN}. Later, Tojeiro \cite{To} proved that a constant angle hypersurface in $S^m\times\mathbb R$ or $H^m\times\mathbb R$ has to be a slice, a vertical cylinder, or a hypersurface that can be parametrized explicitly by using the parametrization of a semi-parallel hypersurface in the first factor with a linear parametrization in the second factor.

 Rotation hypersurfaces in $S^m\times\mathbb R$ and $H^m\times\mathbb R$ ware  introduced and studied in \cite{DFV}  where the authors classified minimal rotation hypersurfaces, and intrinsically flat rotation hypersurfaces in $S^m\times\mathbb R$ and $H^m\times\mathbb R$. For rotation surfaces with constant Gauss curvature in $S^2\times\mathbb R$ and $H^2\times\mathbb R$ see \cite{AEG, AEG2}.

For classifications of pseudo-parallel hypersurfaces in $S^m\times \r$ and $H^m\times \r$  see \cite{LY} and  \cite{LT}.
It was proved in \cite{LY} and \cite{To} that the hypersurfaces of $S^m\times \r$ and $H^m\times \r$ that have exactly three principal curvatures are vertical cylinders over a semi-parallel hypersurface in the first factor or are explicitly parametrized by using the parametrization of a semi-parallel hypersurface in the first factor with a linear parametrization in the second factor in the way given in \cite{To}.  A classification of the pseudo-parallel hypersurfaces of $S^m\times \r$ and $H^m\times \r$ which are minimal or have constant mean curvature is given in  \cite{LT}.

A hypersurface in $S^m\times\mathbb R$ (resp. $H^m\times\mathbb R$) is said to be {\it normally flat} if it has flat normal bundle when viewed as a codimensional 2 submanifold in $\r^{m+2}\supset S^m\times\mathbb R$ (resp. $\mathbb{L}^{m+2}\supset H^m\times\mathbb R$). It was proved in \cite{DFV09} and \cite{DMN} for the case of $m=2$, and in \cite{To} for the general case that a hypersurface in $S^m\times\mathbb R$ or $H^m\times\mathbb R$ is normally flat if and only if  $T$, the tangent component of $\partial_t$ is a principal direction. The results of \cite{DFV09}, \cite{DMN}, and \cite{To} also show that the family of normally flat hypersurfaces includes both the families of rotation hypersurfaces and that of constant angle hypersurfaces as proper subsets.

In this paper, we study biharmonic hypersurfaces in a
product $L^m\times \mathbb{R}$ of an Einstein space $L^m$ and a real
line $\mathbb R$. Recall that a hypersurface is {\it biharmonic} if the isometric immersion defining the hypersurface is a biharmonic map. For a recent survey on the study of biharmonic submanifolds see \cite{Ou3}. It was proved in \cite{Ou1} that a hypersurface
$\varphi:M^{m}\to N^{m+1}$  with mean curvature vector
$\eta=H\xi$ is biharmonic if and only if
\begin{equation}\label{BHEq}
\begin{cases}
\Delta H-H |A|^{2}+H{\rm
Ric}^N(\xi,\xi)=0,\\
 2A\,(\nabla H) +\frac{m}{2} \nabla H^2
-2\, H \,({\rm Ric}^N\,(\xi))^{\top}=0,
\end{cases}
\end{equation}
where ${\rm Ric}^N : T_qN\to T_qN$ denotes the Ricci
operator of the ambient space defined by $\langle {\rm Ric}^N\, (Z),
W\rangle={\rm Ric}^N (Z, W)$.

For the study of biharmonic hypersurfaces in $S^m\times \r$ and $H^m\times \r$, it was proved in \cite{OW} that the only proper biharmonic surface with constant mean curvature in $S^2\times \r$ and $H^2\times \r$ is an open subset of the vertical cylinder $S^1(\frac{1}{\sqrt{2}})\times \r$, and that there is no totally umbilical proper biharmonic surface in $S^2\times \r$ and $H^2\times \r$. The result on the case of $S^2\times \r$ was later generalized \cite{FOR} to the case of $S^m\times \r$. Note that for the higher dimension, even we know that a proper biharmonic hypersurface in $S^m\times \r$ is a vertical cylinder $M^{m-1}\times S^m$ where $M^{m-1}$ is a proper biharmonic hypersurface of the sphere $S^m$, the complete picture is still missing as the classification of biharmonic hypersurfaces of a sphere is still far from our reach. For more study of proper biharmonic submanifolds with parallel mean curvature vector field  in $S^m\times \r$ see \cite{FOR}, and for some classification of biconservative surfaces (a class of surfaces that contains biharmonic surfaces as a subclass) with parallel mean curvature vector field in $S^m\times \mathbb{R}$ and $H^m\times \mathbb{R}$ see \cite{FOP}. 

The rest of the paper is organized as follows. In Section \ref{SSC}, we compute the Laplacian of the mean curvature function of a biharmonic hypersurface in the product $M^m\times \r$, and apply it to prove, among other things,  that a
biharmonic hypersurface with constant mean curvature in a product $L^m\times \mathbb{R}$ of an
Einstein space and a line is either minimal or a vertical cylinder (Theorem \ref{ConstH}). In Section 3, we first derive the  biharmonic equation for hypersurfaces in $S^m\times \mathbb{R}$ and $H^m\times \mathbb{R}$ in terms of the angle functions of the hyperesurfaces (Lemma \ref{ML2}). Then, as applications, we use the biharmonic equation to obtain a
complete classification of constant angle biharmonic surfaces in
$S^2\times \mathbb{R}$ and $H^2\times \mathbb{R}$ (Theorem
\ref{MT}), and to have a system of ordinary differential equations for rotation biharmonic hypersurfaces in $S^m\times \mathbb{R}$ (Theorem
\ref{RE1}). Utilizing these  equations we give a classification of biharmonic hypersurfaces in $S^m\times \mathbb{R}$ and $H^m\times \mathbb{R}$ which are totally umbilical or semi-parallel for $m\ge 3$ (Theorem \ref{semibih}) in Section 4. In Section 5, by using the parametrizations of rotation surfaces introduced
in \cite{AEG}, we obtain some classification results on biharmonic
rotation surfaces in $S^2\times\r$ and $H^2\times\r$.

Throughout the paper, we assume that a hypersurface
$\varphi:M\rightarrow (L\times\mathbb{R},g^L+dt^2)$ is two-sided, which means that there exists a globally defined unit normal vector
field.
\section{biharmonic hypersurfaces in a product of Einstein spaces}\label{SSC}
First, we recall the following corollary which will be used in several places in the paper.
\begin{corollary}\cite{Ou2}\label{O10}
A hypersurface $\varphi: M^m\to P^m\times\mathbb{R}$ in the product space is biharmonic if and only if  both its component maps $\pi_1\circ\varphi: (M, g)\to (P, g^P)$  and $\pi_2\circ\varphi: (M, g)\to (\r, {\rm d}t^2)$ are biharmonic maps with respect to the induced metric $g=\varphi^{*}(g^P+{\rm d}t^2)$. In particular, the height function $h=\pi_2\circ\varphi$ of a biharmonic hypersurface is a biharmonic function on the hypersurface.
\end{corollary}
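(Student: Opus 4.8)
The plan is to exploit the fact that, for a Riemannian product, all of the geometric data entering the notion of biharmonicity split orthogonally along the two factors. Recall that a smooth map $\varphi:(M,g)\to(N,h)$ is biharmonic exactly when its bitension field vanishes,
\begin{equation*}
\tau_2(\varphi)=-\Delta^{\varphi}\tau(\varphi)-\operatorname{trace}R^N(d\varphi,\tau(\varphi))d\varphi=0,
\end{equation*}
where $\tau(\varphi)=\operatorname{trace}\nabla d\varphi$ is the tension field, $\Delta^{\varphi}$ is the rough Laplacian on sections of $\varphi^{-1}TN$, and $R^N$ is the curvature of the target. The strategy is to write $\varphi=(\varphi_1,\varphi_2)$ with $\varphi_1=\pi_1\circ\varphi$ and $\varphi_2=\pi_2\circ\varphi=h$, and to show that each of the two terms above respects the splitting $\varphi^{-1}T(P\times\r)=\varphi_1^{-1}TP\oplus\varphi_2^{-1}T\r$.

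First I would record the elementary but essential facts about the product metric $g^P+\mathrm{d}t^2$: its Levi-Civita connection is the direct sum of the connections on the two factors, and consequently its curvature tensor satisfies $R^{P\times\r}((X_1,X_2),(Y_1,Y_2))(Z_1,Z_2)=(R^P(X_1,Y_1)Z_1,0)$ — there are no terms mixing the factors and the flat $\r$-factor contributes nothing. Pulling these back by $\varphi$, the induced connection on $\varphi^{-1}T(P\times\r)$ is the direct sum of the induced connections on $\varphi_1^{-1}TP$ and $\varphi_2^{-1}T\r$, and the same is true of the rough Laplacian $\Delta^{\varphi}$.

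With these splittings in hand the computation is immediate. Since $d\varphi=d\varphi_1\oplus d\varphi_2$ and the pullback connection splits, the tension field decomposes as $\tau(\varphi)=\big(\tau(\varphi_1),\tau(\varphi_2)\big)$, where $\tau(\varphi_1)$ is the tension field of $\varphi_1$ into $P$ and $\tau(\varphi_2)$ is that of $h$ into $\r$. Applying $\Delta^{\varphi}$ preserves this decomposition, while the curvature term lives entirely in the $P$-factor and, because $R^{P\times\r}$ only sees the $P$-components of its three arguments, equals $\big(\operatorname{trace}R^P(d\varphi_1,\tau(\varphi_1))d\varphi_1,\,0\big)$. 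Hence
\begin{equation*}
\tau_2(\varphi)=\big(\tau_2(\varphi_1),\,\tau_2(\varphi_2)\big),
\end{equation*}
the pair of the bitension fields of the two component maps, both computed with respect to the common domain metric $g$. Therefore $\tau_2(\varphi)=0$ if and only if $\tau_2(\varphi_1)=0$ and $\tau_2(\varphi_2)=0$, which is exactly the asserted equivalence. For the final assertion, $\varphi_2=h$ maps into the flat line, so its curvature term vanishes and biharmonicity reduces to $\Delta^{\varphi_2}\tau(h)=0$, i.e.\ to the iterated Laplacian $\Delta_g^2 h=0$; thus $h$ is a biharmonic function on $(M,g)$.

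I expect the only genuine work to be the bookkeeping in the second paragraph: verifying carefully that the pullback connection and rough Laplacian of the product split as direct sums and that no cross-curvature terms arise. This is standard for Riemannian products, but it must be checked, since it is precisely what decouples the two biharmonic systems; once it is in place, the stated equivalence and its corollary for the height function follow with no further analysis.
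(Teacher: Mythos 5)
Your argument is correct: the splitting of the pullback bundle, pullback connection, tension field, rough Laplacian, and curvature term along the two factors of the Riemannian product (with no mixed curvature terms and a flat $\r$-factor) is exactly what decouples the bitension field into the pair $\bigl(\tau_2(\varphi_1),\tau_2(\varphi_2)\bigr)$, and flatness of $\r$ reduces biharmonicity of the height function to $\Delta_g^2h=0$. The paper offers no proof of this corollary, quoting it from \cite{Ou2}, and your argument is essentially the standard one given there, so there is nothing further to check.
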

An immediate consequence of this and the maximum principle for Laplace operator is the following
\corollary\label{O11}(see also \cite{FOR})
There is no compact proper biharmonic hypersurface in the product manifold $ P^m\times\mathbb{R}$ for any Riemannian manifold $(P^m, g)$.
\endcorollary
The angle function $\theta=\langle \xi,\frac{\partial}{\partial t}
\rangle$ for a hypersurface  $M^m\to (P^m\times\mathbb{R}, g^P+dt^2)$, where $\xi$ is the unit normal vector field of the hypersurface, has played an important role in the study of the geometry of the hypersurfaces in the product space. The following Laplacian of the angle function
$\theta$, computed in (cf. \cite{AAA}), will also be used in our paper.
\begin{align}\label{la}
\Delta\theta=-m\langle\nabla H,\partial_t\rangle-\theta(|A|^2+{\rm
Ric}^N(\xi,\xi)),
\end{align}
where $\partial_t=\frac{\partial}{\partial t}$.
 Now, we prove the following lemma which gives the Laplacian of the mean curvature of the hypersurface in terms of the angle function.
\begin{lemma}\label{ML}
Let $\varphi:M^m\rightarrow (P^m\times\mathbb{R}, g^P+dt^2)$ be a biharmonic  hypersurface in $P\times\mathbb{R}$.
Then, we have the following identity
\begin{align}\label{HT}
0=\Delta(H\theta)=\theta\Delta H+2\langle\nabla H,\nabla \theta\rangle+H\Delta\theta.
\end{align}
\end{lemma}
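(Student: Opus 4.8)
The plan is to show that $\Delta(H\theta)=0$ by invoking Corollary~\ref{O10}, and then expand the Laplacian by the Leibniz rule to obtain the stated identity. First I would observe that the height function $h=\pi_2\circ\varphi$ is, by Corollary~\ref{O10}, a biharmonic function on the hypersurface, i.e. $\Delta^2 h=0$. The key is to relate $\Delta h$ to the product $H\theta$. A standard computation for the second factor gives that the tension field of $h$ is $\Delta h = m\langle\eta,\partial_t\rangle = m H\theta$, where $\eta=H\xi$ is the mean curvature vector and $\theta=\langle\xi,\partial_t\rangle$ is the angle function. Since $h$ is a biharmonic function, $\Delta(\Delta h)=0$, and therefore $\Delta(mH\theta)=0$, which yields $\Delta(H\theta)=0$ after dividing by the constant $m$.

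Having established the vanishing, the second equality in \eqref{HT} is the Leibniz product rule for the Laplacian applied to the product $f_1 f_2 = H\cdot\theta$, namely
\begin{align}\notag
\Delta(H\theta)=\theta\,\Delta H+2\langle\nabla H,\nabla\theta\rangle+H\,\Delta\theta,
\end{align}
which holds for any two smooth functions on a Riemannian manifold. Assembling these two facts gives the claimed chain of equalities.

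The main obstacle I anticipate is the verification that $\Delta h = mH\theta$, i.e. that the tension field of the height function equals $m$ times the normal component of the mean curvature vector along $\partial_t$. This requires care with sign and normalization conventions (in particular whether $H$ denotes the trace or the averaged trace of the shape operator, and the sign convention in the definition of $A$). I would confirm it by writing $h=t\circ\varphi$ and computing $\Delta h = \operatorname{tr}\nabla dh$ using an orthonormal frame adapted to the hypersurface, using that $\partial_t$ is parallel in the product metric $g^P+dt^2$ so that $dh(X)=\langle X,\partial_t\rangle$ and the second fundamental form enters only through the normal direction. Once this identification is secured, the rest of the argument is immediate from Corollary~\ref{O10} and the product rule.
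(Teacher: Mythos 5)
Your proposal is correct and follows exactly the paper's own argument: identify $\Delta h = m H\theta$ for the height function, invoke Corollary~\ref{O10} to get $\Delta^2 h = 0$, and expand $\Delta(H\theta)$ by the Leibniz rule. The verification you flag as the main obstacle, $\Delta h = mH\theta$, is precisely the step the paper also defers to the literature (citing \cite{AAA}), so nothing is missing.
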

\begin{proof}
Let  $h=\pi_2\circ \varphi$,  as in Corollary \ref{O10}, be the height function of the hypersurface, then, one can check (see also \cite{AAA}) that
$\Delta h=m\theta H$. A further computation yields
\begin{align}\notag
\Delta^2h=\Delta(H\theta)=\Delta H\theta+2\langle\nabla H,\nabla \theta\rangle+H\Delta\theta,
\end{align}
from which, together with the last statement of Corollary \ref{O10},
we obtain the lemma.
\end{proof}
Now, we are ready to prove the following theorem.
\begin{theorem}\label{ConstH}
Let $(L^m, g^L)$ be an Einstein manifold with ${\rm Ric}^L=\lambda g^L$. Then, a  constant mean curvature biharmonic hypersurface $\varphi:M^m\to (L^m\times\mathbb{R}, g^L+dt^2)$ is either minimal, or a vertical cylinder over a biharmonic hypersurface in $L^m$, i.e.,
$\varphi(M)=\phi(M^{m-1})\times \mathbb{R}$, where $\phi:
M^{m-1}\to (L^m, g^L)$ is a biharmonic hypersurface in $(L^m, g^L)$.
\end{theorem}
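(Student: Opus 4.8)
The plan is to assume the hypersurface is not minimal, i.e.\ $H\neq0$ is a nonzero constant, and to prove that the angle function $\theta=\langle\xi,\partial_t\rangle$ vanishes identically; the vertical cylinder structure then follows geometrically. First I would feed the hypothesis $\nabla H=0$ into the biharmonic system \eqref{BHEq}. The first equation collapses to $H(|A|^2-{\rm Ric}^N(\xi,\xi))=0$, so dividing by $H$ gives $|A|^2={\rm Ric}^N(\xi,\xi)$. Since $N=L^m\times\mathbb{R}$ is a Riemannian product with $\mathbb{R}$ flat and ${\rm Ric}^L=\lambda g^L$, decomposing $\xi=\xi^L+\theta\partial_t$ and using $|\xi^L|^2=1-\theta^2$ yields ${\rm Ric}^N(\xi,\xi)=\lambda(1-\theta^2)$, hence
\begin{align}\notag
|A|^2=\lambda(1-\theta^2).
\end{align}

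Next I would extract information about $\theta$. With $H$ constant, Lemma \ref{ML} reduces to $0=H\Delta\theta$ (equivalently, the height function $h$ satisfies $\Delta h=mH\theta$ and is biharmonic by Corollary \ref{O10}, so $mH\Delta\theta=\Delta^2h=0$), and since $H\neq0$ we get $\Delta\theta=0$. Substituting $\Delta\theta=0$ and $\nabla H=0$ into the Laplacian of the angle function \eqref{la} gives $\theta(|A|^2+{\rm Ric}^N(\xi,\xi))=0$. Combining this with the previous paragraph, where $|A|^2+{\rm Ric}^N(\xi,\xi)=2\lambda(1-\theta^2)$, I obtain the single scalar equation
\begin{align}\notag
\lambda\,\theta\,(1-\theta^2)=0.
\end{align}

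Then comes a short case analysis. If $\lambda=0$ the first step forces $|A|^2=0$, hence $A=0$ and $H=0$, contradicting $H\neq0$; so no non-minimal example exists and we may assume $\lambda\neq0$. Then $\theta(1-\theta^2)=0$ holds pointwise, so $\theta$ takes values in the discrete set $\{0,\pm1\}$; being continuous on the connected manifold $M$, $\theta$ is one of the constants $0,1,-1$. If $\theta\equiv\pm1$ then $\xi=\pm\partial_t$ everywhere, so $M$ is an open piece of a slice $L^m\times\{t_0\}$, which is totally geodesic and thus minimal, again contradicting $H\neq0$. Therefore $\theta\equiv0$. (One can reach the same pointwise trichotomy from the second equation of \eqref{BHEq}, which with $\nabla H=0$ gives $\lambda\theta T=0$, but the route above is self-contained.)

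Finally I would translate $\theta\equiv0$ into the stated structure. Since $\theta\equiv0$ means $\partial_t=T$ is everywhere tangent to $M$, the vertical lines $\{p\}\times\mathbb{R}$ are integral curves of a tangent field and remain in $M$, so $M$ is foliated by them and $\varphi(M)=\phi(M^{m-1})\times\mathbb{R}$ is a vertical cylinder over a hypersurface $\phi:M^{m-1}\to L^m$. That $\phi$ is itself biharmonic follows from Corollary \ref{O10}: the first component map $\pi_1\circ\varphi$ is biharmonic, and on a vertical cylinder with product induced metric this map is constant along the $\mathbb{R}$-factor, so its bitension field agrees with that of $\phi$; hence $\phi$ is a biharmonic hypersurface in $(L^m,g^L)$. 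I expect the main obstacle to be this last step---rigorously passing from the analytic condition $\theta\equiv0$ to the global cylinder decomposition and verifying that biharmonicity descends from $\pi_1\circ\varphi$ to $\phi$---together with the care needed to exclude $\theta\equiv\pm1$; the intervening Laplacian computations are routine once \eqref{BHEq}, \eqref{la} and Lemma \ref{ML} are in hand.
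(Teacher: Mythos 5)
Your proposal is correct and follows essentially the same route as the paper: it combines Lemma \ref{ML} (giving $\Delta\theta=0$), the Laplacian formula \eqref{la}, and the first equation of \eqref{BHEq} with ${\rm Ric}^N(\xi,\xi)=\lambda(1-\theta^2)$ to force $\theta\,|A|^2=0$ and hence $\theta\equiv 0$. The only difference is cosmetic: where the paper disposes of $2\theta|A|^2=0$ in one line (a neighborhood with $|A|^2=0$ would force $H=0$), you run a slightly more explicit case analysis on $\lambda=0$ and $\theta=\pm1$, which is a harmless and arguably cleaner way to reach the same conclusion.
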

\begin{proof}
If $H=0$, then $M$ is minimal. Now, if the constant $H\not=0$, then, by $(\ref{HT})$, we have $H\Delta\theta=0$ which implies that $\Delta\theta=0$. On the other hand, one can check that ${\rm Ric}^N(\xi,\xi)=\lambda(1-\theta^2)$ from which, together with $(\ref{la})$, we have
\begin{equation}\label{eq1}
\theta(|A|^2+\lambda(1-\theta^2))=0.
\end{equation}
 Now, using the  first equation of $(\ref{BHEq})$, we have
\begin{equation}\label{eq2}
|A|^2=\lambda(1-\theta^2).
\end{equation}
Combining (\ref{eq1}) and (\ref{eq2}) we have $2\theta\,|A|^2=0$. It follows that  $\theta\equiv0$ since otherwise, there would be a neighborhood on which $|A|^2=0$ and hence $H\equiv 0$, which is a contradiction.

Noting that $\theta=\langle \xi, \partial_t\rangle\equiv 0$ means exactly that $\partial_t$ is tangent to the hypersurface, we conclude that the hypersurface $M$ is a vertical cylinder, i.e., $\varphi(M)=\phi(M^{m-1})\times \mathbb{R}$, where $\phi: M^{m-1}\to L^m$ is a biharmonic (by Corollary \ref{O10}) hypersurface  in the Einstein space $(L^m, g^L)$.
\end{proof}
\begin{remark}
Note that if the Einstein space $L$ is a sphere, then Theorem \ref{ConstH} recovers a part of a result in \cite{FOR}, which gives more specific descriptions of biharmonic vertical cylinders in $S^m(r)\times \r$:

\noindent $(i)$ If $m=2$, then $M^1$ is a circle in $S^2$ with
curvature equal to 1 and $|\eta|=\frac{1}{2}$;

\noindent $(ii)$ If $m=3$, $M^2$ is an open part of a small
hypersphere $S^2(2)\subset S^3$ and $|\eta|=\frac{2}{3}$;

\noindent $(iii)$ if $m>3$, then $|\eta|\in
(0,\frac{m-3}{m}]\cup\{\frac{m-1}{m}\}$; Furthermore,

$(a)$ $|\eta|=\frac{m-3}{m}$ if and only if $M^{m-1}$ is an open
part of the standard product $S^{m-2}(2)\times S^1(2)\subset S^m,$

$(b)$ $|\eta|=\frac{m-1}{m}$ if and only if $M^{m-1}$ is an open
part of a small hypersphere $S^{m-1}(2)\subset S^m,$

where $\eta$ is the mean curvature vector field of $M$.
\end{remark}

Parallel to the compact case in Corollary \ref{O11},  we can use Yau's Maximum principle to have the following.
%%%%%%%%%%%%

\begin{corollary}\label{cor2}
Let $(L^m, g^L)$ is an Einstein manifold and $\varphi:M^m\rightarrow (L^m\times\mathbb{R},g^L+dt^2)$ be a complete constant angle biharmonic hypersurface.

$(i)$ If the mean curvature $H$ is positive and $M$ has nonnegative Ricci curvature, then $M$ is a vertical cylinder over a biharmonic hypersurface in the Einstein space $(L^m, g^L)$.

$(ii)$ If the mean curvature $H$ is nonnegative and $H\in L^p(M)$ for $1<p<\infty$, then $M$ is minimal, or a vertical cylinder over a biharmonic hypersurface in $(L^m, g^L)$.
 \end{corollary}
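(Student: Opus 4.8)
The plan is to use the constant-angle hypothesis to reduce the identity $(\ref{HT})$ to the harmonicity of the mean curvature $H$, then to promote $H$ to a constant by a Liouville-type theorem of Yau, and finally to invoke Theorem \ref{ConstH}.

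Since $M$ has constant angle, $\theta$ is a constant, so $\nabla\theta=0$ and $\Delta\theta=0$. Feeding this into $(\ref{HT})$ leaves $\theta\,\Delta H=0$, which splits the discussion into two cases. If $\theta\equiv 0$, then, exactly as at the end of the proof of Theorem \ref{ConstH}, $\partial_t$ is tangent to $M$ and $M$ is a vertical cylinder over a hypersurface of $L^m$, which is biharmonic by Corollary \ref{O10}; this is already the desired conclusion. If instead $\theta$ is a nonzero constant, then $\Delta H=0$, i.e. $H$ is a harmonic function on the complete manifold $M$.

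For part $(i)$, $H$ is then a positive harmonic function on a complete manifold of nonnegative Ricci curvature, so Yau's Liouville theorem forces $H$ to be a (positive) constant. For part $(ii)$, $H$ is a nonnegative harmonic, hence subharmonic, function lying in $L^p(M)$ with $1<p<\infty$, so Yau's $L^p$ Liouville theorem --- which requires only completeness and no curvature assumption --- again forces $H$ to be constant. In either case $M$ is a constant mean curvature biharmonic hypersurface, and Theorem \ref{ConstH} yields that $M$ is minimal or a vertical cylinder. In part $(i)$ the hypothesis $H>0$ excludes the minimal alternative, so $M$ is a vertical cylinder; in part $(ii)$ both alternatives are allowed, giving the stated dichotomy.

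The step I would state most carefully is the bookkeeping between the two cases rather than any hard estimate: a vertical cylinder has $\theta\equiv 0$, so in the subcase $\theta\neq 0$ the output of Theorem \ref{ConstH} can only be the minimal one. In particular the subcase $\theta\neq 0$ is vacuous under the hypotheses of part $(i)$, and the whole of $(i)$ comes down to $\theta\equiv 0$. I do not anticipate a genuine obstacle; the only point demanding attention is matching each part to the correct theorem of Yau --- the positive-harmonic version under ${\rm Ric}\ge 0$ for $(i)$ and the $L^p$-subharmonic version for $(ii)$ --- and checking that completeness is precisely the hypothesis those theorems need.
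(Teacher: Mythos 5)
Your proof is correct and follows essentially the same route as the paper: the constant-angle hypothesis collapses the identity (\ref{HT}) to $\theta\,\Delta H=0$, Yau's two Liouville theorems (the positive-harmonic one under ${\rm Ric}\ge 0$ for part $(i)$, the $L^p$-subharmonic one for part $(ii)$) force $H$ to be constant when $\theta\neq 0$, and Theorem \ref{ConstH} finishes. Your extra bookkeeping --- that the subcase $\theta\neq 0$ is in fact vacuous in part $(i)$ because the non-minimal output of Theorem \ref{ConstH} is a vertical cylinder, which has $\theta\equiv 0$ --- is a correct refinement of a point the paper leaves implicit.
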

\begin{proof}
If $\theta$ is a nonzero constant, then, by $(\ref{HT})$,  we have $\Delta H=0$, from which, together with Yau's maximum principle,
we have $H$ is constant. The corollary then follows from Theorem $\ref{ConstH}$.
\end{proof}
\begin{corollary}\label{umb}
Let $(L^m, g^L)$ is an Einstein manifold with ${\rm Ric}^L=\lambda g^L$ and $\varphi:M^m\rightarrow (L^m\times\mathbb{R}, g^L+dt^2)$ be a totally umbilical biharmonic hypersurface with constant angle function, then it is either minimal, or a vertical cylinder over a biharmonic hypersurface in the Einstein space $(L^m, g^L)$.
\end{corollary}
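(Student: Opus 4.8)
The plan is to show that the constant angle and totally umbilical hypotheses force the mean curvature $H$ to be \emph{constant}, after which the conclusion is immediate from Theorem \ref{ConstH}. This is the same endgame as in Corollary \ref{cor2}, but here total umbilicity will replace the completeness and growth assumptions that were needed there in order to invoke Yau's maximum principle; the point is to obtain a genuinely constant $H$ with no global hypothesis on $M$.

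First I would exploit the constant angle condition pointwise. Writing $\partial_t=T+\theta\xi$, where $T=(\partial_t)^{\top}$ is the tangent component of $\partial_t$, and differentiating this relation along a tangent vector $X$ by means of the Gauss and Weingarten formulas (using that $\partial_t$ is parallel in $N=L^m\times\mathbb{R}$), one obtains the standard identities $\nabla_XT=\theta A(X)$ and
\[
\nabla\theta=-A(T).
\]
Since $\theta$ is constant we have $\nabla\theta=0$, hence $A(T)=0$; total umbilicity, i.e. $A=H\,{\rm Id}$, then yields the pointwise relation $H\,T=0$.

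Next I would feed this into the tangential part of the biharmonic system $(\ref{BHEq})$. Using the Einstein condition and the product structure one computes, exactly as in the proof of Theorem \ref{ConstH}, that ${\rm Ric}^N(\xi)=\lambda(\xi-\theta\,\partial_t)$, so that its tangential component is $({\rm Ric}^N(\xi))^{\top}=-\lambda\theta\,T$. Inserting $A=H\,{\rm Id}$ (whence $A(\nabla H)=H\nabla H$ and $\nabla H^2=2H\nabla H$) into the second equation of $(\ref{BHEq})$ gives
\[
(m+2)\,H\,\nabla H=-2\lambda\theta\,H\,T.
\]
By the previous step $H\,T=0$, so the right-hand side vanishes and $\nabla(H^2)=0$. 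Thus $H^2$ is constant, and since $M$ is connected and two-sided, $H$ itself is constant.

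With $H$ constant, Theorem \ref{ConstH} applies directly and shows that $M$ is either minimal or a vertical cylinder over a biharmonic hypersurface of $(L^m,g^L)$, as claimed. The only delicate point is the passage to a \emph{constant} (rather than merely harmonic) mean curvature without any global assumption, and this is precisely where total umbilicity is essential: it collapses the shape operator to the scalar $H$ and thereby reduces the tangential biharmonic equation to the first order relation $\nabla(H^2)=0$. I note that one could also finish more directly, since when $\theta^2<1$ one has $|T|^2=1-\theta^2>0$, so $H\,T=0$ forces $H\equiv0$, while $\theta^2=1$ gives a totally geodesic slice; routing the argument through Theorem \ref{ConstH} simply reproduces the stated dichotomy without this case distinction.
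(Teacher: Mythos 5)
Your proposal is correct, and it takes a genuinely different route from the paper. The paper's proof splits on dimension: for $m\neq 4$ it quotes the result of \cite{BMO3} that a totally umbilical biharmonic submanifold has constant mean curvature, and for $m=4$ it combines $|A|^2=4H^2$ with the normal (first) equation of $(\ref{BHEq})$ and Lemma \ref{ML} (constant $\theta\neq 0$ forcing $\Delta H=0$) to get the algebraic relation $-4H^3+\lambda H(1-\theta^2)=0$, whence $H$ is constant; Theorem \ref{ConstH} then finishes. You instead work pointwise and uniformly in $m$: the identity $\nabla\theta=-A(T)$ together with constant $\theta$ gives $A(T)=0$, and umbilicity turns this into $H\,T=0$, after which the tangential (second) equation of $(\ref{BHEq})$ collapses to $\nabla(H^2)=0$. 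All the computations check out ($({\rm Ric}^N(\xi))^{\top}=-\lambda\theta T$ is correct for an Einstein first factor, and $H^2$ constant plus connectedness gives $H$ constant), so Theorem \ref{ConstH} applies. What your approach buys is substantial: it avoids the external citation, avoids the $m=4$ case split, and — as you note in your closing remark — since $|T|^2=1-\theta^2$ is a nonzero constant unless $\theta^2=1$ (in which case $M$ lies in a totally geodesic slice), the relation $H\,T=0$ already forces $H\equiv 0$ without invoking biharmonicity at all. In other words, your argument shows the sharper statement that a totally umbilical constant-angle hypersurface of $L^m\times\mathbb{R}$ is automatically minimal, so the ``vertical cylinder'' alternative in the corollary is vacuous for non-minimal hypersurfaces under these hypotheses; this is stronger than what the paper's proof extracts.
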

\begin{proof}
Note that it was proved in \cite{BMO3} (see also \cite{DO}) that any totally umbilical biharmonic submanifold $M^m$ with $m\not=4$ has constant mean curvature. Using this, together with Theorem \ref{ConstH}, we obtain the corollary for the case of $m\ne 4$. Now for $m=4$, since $M^4$ is totally umbilical, we can choose an an orthonormal frame $\{e_1,\cdots, e_4\}$ so that  $A(e_i)=H e_i$ for $i=1, 2, 3, 4$. It follows that $|A|^2=4H^2$.
Using the first equation in (\ref{BHEq}) we have
$$\Delta H -4H^3+\lambda H(1-\theta^2)=0.$$
If $\theta\not=0$, by $(\ref{HT})$, $\Delta H=0$.
Therefore we have
$$-4H^3+\lambda H(1-\theta^2)=0,$$
which means that $H$ is constant.
Thus, Theorem \ref{ConstH} applies to complete the proof.
\end{proof}

To prove the proposition we will use a well known Yau's Maximum principle:

\begin{theorem}
$(a)$ Let $u$ be a non-negative smooth subharmonic function on a complete Riemannian manifold $M$.
 Then $\int_Mu^p=+\infty$ for $p>1$, unless $u$ is a constant function.

$(b)$ Let $u$ be a positive smooth harmonic function on a complete Riemannian manifold with non-negative Ricci curvature. Then $u$ is a constant function.
\end{theorem}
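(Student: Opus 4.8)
The plan is to treat both statements as Liouville-type theorems proved by the classical Bochner technique together with a family of cutoff functions adapted to the completeness of $M$. In both parts I would fix a point $o\in M$, set $n=\dim M$, and for each $R>0$ choose a Lipschitz cutoff $\phi=\phi_R$ with $\phi\equiv1$ on the geodesic ball $B_R(o)$, $\phi\equiv0$ outside $B_{2R}(o)$, $0\le\phi\le1$, and $|\nabla\phi|\le C/R$; completeness guarantees that the balls $B_R(o)$ exhaust $M$ as $R\to\infty$, and it is precisely this exhaustion that lets the non-compact hypotheses play the role compactness plays for the ordinary maximum principle.

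For part $(a)$ the plan is a Caccioppoli-type estimate. Since $u\ge0$ and $\Delta u\ge0$, I would test the subharmonicity against $\phi^2u^{p-1}$ and integrate by parts (with no boundary term, as $\phi$ has compact support) to obtain
$$0\le\int_M\phi^2u^{p-1}\Delta u=-\int_M\langle\nabla(\phi^2u^{p-1}),\nabla u\rangle.$$
Expanding $\nabla(\phi^2u^{p-1})=2\phi u^{p-1}\nabla\phi+(p-1)\phi^2u^{p-2}\nabla u$ and applying Young's inequality to the cross term $-2\int\phi u^{p-1}\langle\nabla\phi,\nabla u\rangle$ lets me absorb a fraction of $(p-1)\int\phi^2u^{p-2}|\nabla u|^2$ and arrive at
$$\int_{B_R(o)}u^{p-2}|\nabla u|^2\le\frac{C}{R^2}\int_{B_{2R}(o)}u^p.$$
Because $\int_Mu^p<\infty$ by hypothesis, the right-hand side tends to $0$ as $R\to\infty$, forcing $u^{p-2}|\nabla u|^2\equiv0$, hence $\nabla u\equiv0$ on $\{u>0\}$, and so $u$ is constant.

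For part $(b)$ the plan is Yau's gradient estimate via the Bochner formula. Here $u>0$ is harmonic, so $f:=\log u$ is well defined and satisfies $\Delta f=-|\nabla f|^2$. Writing $w:=|\nabla f|^2$ and applying the Bochner formula, together with the refined bound $|\nabla^2f|^2\ge(\Delta f)^2/n=w^2/n$ and the hypothesis ${\rm Ric}\ge0$, I would obtain
$$\tfrac12\Delta w\ge\tfrac1n\,w^2-\langle\nabla f,\nabla w\rangle.$$
Running the maximum principle for $\phi^2w$ on $B_{2R}(o)$—and invoking the Laplacian comparison theorem $\Delta r\le(n-1)/r$, which is exactly where ${\rm Ric}\ge0$ enters—produces the local gradient estimate $\sup_{B_R(o)}|\nabla\log u|\le C/R$. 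Letting $R\to\infty$ yields $\nabla u\equiv0$, so $u$ is constant.

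The main obstacle in each part is technical. In $(a)$ the weight $u^{p-2}$ degenerates when $1<p<2$, which I would handle by carrying out the argument with $u+\varepsilon$ in place of $u$ and letting $\varepsilon\to0$ at the end, while using the compactly supported $\phi$ to legitimize the integration by parts on the non-compact $M$. In $(b)$ the delicate step is closing the maximum-principle estimate at the interior maximum of $\phi^2w$: the terms arising from $\langle\nabla f,\nabla w\rangle$ and from $w\,\Delta(\phi^2)$ must be dominated by the good term $w^2/n$ via Young's inequality, and it is the Laplacian comparison theorem—hence the nonnegativity of the Ricci curvature—that renders these lower-order contributions harmless as $R\to\infty$.
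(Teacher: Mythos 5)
The paper does not prove this statement at all: it is quoted verbatim as ``a well known Yau's Maximum principle'' with a citation to Yau's 1976 Indiana paper \cite{Y}, so there is no in-paper argument to compare against. Your proposal is a correct reconstruction of the standard (indeed, essentially Yau's original) proofs: part $(a)$ via the Caccioppoli-type cutoff estimate $\int_{B_R}u^{p-2}|\nabla u|^2\le CR^{-2}\int_{B_{2R}}u^p$ applied to the contrapositive, and part $(b)$ via the Cheng--Yau gradient estimate $\sup_{B_R}|\nabla\log u|\le C(n)/R$ coming from the Bochner formula and Laplacian comparison. Two technical points deserve more care than your sketch gives them. First, in $(a)$ with $1<p<2$ the regularization $u\mapsto u+\varepsilon$ must be dismantled in the right order: the right-hand side becomes $CR^{-2}\int_{B_{2R}}(u+\varepsilon)^p$, which contains a term of size $\varepsilon^p\,\mathrm{vol}(B_{2R})/R^2$ that need not vanish as $R\to\infty$ on a general complete manifold (no volume-growth control is available without curvature hypotheses), so you should send $\varepsilon\to0$ \emph{first} (monotone convergence handles the left-hand side) and only then $R\to\infty$; also, the conclusion $\nabla u\equiv0$ is obtained only on $\{u>0\}$, and one needs the short connectedness argument that a component of $\{u>0\}$ on which $u$ is a positive constant cannot have nonempty boundary inside $\{u=0\}$. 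Second, in $(b)$ the cutoff is a function of the distance $r$, which fails to be smooth on the cut locus, so the maximum-principle step requires Calabi's trick (or an upper-barrier version of Laplacian comparison); this is routine but should be said. Neither point is a gap in the strategy, and both parts of the proposal would close into complete proofs.
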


We will use the following Liouville type theorem:

\begin{theorem}[\cite{LM}]\label{ThmLM}
Let $(M,g)$ be a complete noncompact manifold and $u\in(0,C],~(C>0)$ a superharmonic function on $M$. If
\begin{eqnarray}\notag
\int_M(\log^{(k)}\frac{Ce^{(k)}}{u})^pdv_g<+\infty
\end{eqnarray}
for some $p>0$ and $k\in \mathbb{N}$, then $u$ is a constant.
Here $\log^{(k)}=\log(\log^{(k-1)})$ and $e^{(k)}=e^{e^{(k-1)}}$, where $\log^{(1)}=\log$ and $e^{(1)}=e$.
\end{theorem}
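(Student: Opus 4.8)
The plan is to convert the positive, bounded, superharmonic function $u$ into a bounded-below \emph{subharmonic} function whose growth matches the integrability hypothesis, and then to invoke an $L^p$-Liouville theorem of Yau type. Concretely, I would set $F(t)=\log^{(k)}(Ce^{(k)}/t)$ for $t\in(0,C]$ and put $v=F(u)$, so that the hypothesis reads $\int_M v^p\,dv_g<+\infty$. First I would record that, since $0<u\le C$ and $\log^{(k)}$ is increasing, the argument satisfies $Ce^{(k)}/u\ge e^{(k)}$, whence $v\ge \log^{(k)}(e^{(k)})=1$; thus $v$ is positive and bounded below by $1$. The whole problem then reduces to the Liouville statement: a complete noncompact $M$ carrying a subharmonic $v\ge 1$ with $v\in L^p$ must have $v$ constant, from which $u$ is constant since $F$ is injective.

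The computational heart is the claim that $F$ is \emph{decreasing and convex} on $(0,C]$. From the chain rule $\Delta v=F'(u)\,\Delta u+F''(u)\,|\nabla u|^2$, the first term is nonnegative because $F'<0$ while $\Delta u\le 0$ (superharmonicity), and the second term is nonnegative once $F''\ge 0$; together these give $\Delta v\ge 0$. Writing $g=Ce^{(k)}/t\ge e^{(k)}$ and $\phi_k=\log^{(k)}$, one differentiates to obtain $F''(t)=\frac{g}{t^2}\bigl(g\,\phi_k''(g)+2\,\phi_k'(g)\bigr)$, and a direct manipulation reduces convexity to the inequality $\sum_{j=2}^{k}\bigl(\prod_{i=1}^{j-1}\log^{(i)}g\bigr)^{-1}\le 1$. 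Here the factor $e^{(k)}$ is exactly what is needed: it forces $\log^{(i)}g\ge e^{(k-i)}\ge e$ for $1\le i\le k-1$, so each summand is at most $e^{-(j-1)}$ and the sum is dominated by the geometric series $\sum_{m\ge1}e^{-m}=1/(e-1)<1$. This establishes convexity for every $k$, and hence that $v$ is subharmonic and bounded below by $1$.

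It remains to run the Liouville step. For $p>1$ this is classical Yau (part $(a)$ of the principle stated above): multiplying $\Delta v\ge 0$ by $\varphi_R^2\,v^{p-1}$ with a standard cutoff $\varphi_R$ ($\varphi_R\equiv1$ on $B_R$, $|\nabla\varphi_R|\le c/R$) and integrating by parts yields $\int_{B_R}v^{p-2}|\nabla v|^2\le \frac{c}{R^2}\int_M v^p\to 0$, so $\nabla v\equiv0$ and $u$ is constant. The main obstacle is the range $0<p\le 1$, where this integration by parts acquires the wrong sign and Yau's argument breaks down; this is precisely the situation the iterated logarithm is designed to cover. I would exploit that $v\ge 1$ together with $v^p\in L^1$ forces $\mathrm{Vol}(M)<+\infty$, and then replace the naive test function by one adapted to the slow growth of $\log^{(k)}$ — equivalently, peel off the outermost logarithm so as to trade the small exponent $p$ for an admissible one — pushing a refined cutoff/integration-by-parts estimate through to conclude $\nabla v\equiv 0$. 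Once $v$ is constant, so is $u$, which completes the proof.
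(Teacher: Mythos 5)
The paper states this result without proof (it is quoted from \cite{LM}), so the comparison must be with the argument in that reference. Your first two steps are sound: the identity $\log^{(k)}(e^{(k)})=1$ does give $v=F(u)\ge 1$, and your convexity computation is correct --- the reduction to $\sum_{j=2}^{k}\bigl(\prod_{i=1}^{j-1}\log^{(i)}g\bigr)^{-1}\le 1$ and the bound by the geometric series $\sum_{m\ge 1}e^{-m}=1/(e-1)<1$ both check out. The problem is the Liouville step for $0<p\le 1$, which is the entire point of the theorem, and there your proposal is not a proof but a hope. Finite volume of $M$ (which does follow from $v\ge1$ and $v^p\in L^1$) is not enough to force an unbounded-above subharmonic function to be constant, and ``peeling off the outermost logarithm'' trades in the wrong direction: if $v=\log w$, then $v^p\in L^1$ is a far \emph{weaker} integrability statement about $w$ than $w^q\in L^1$ for any $q>0$, so you cannot convert a small exponent on $v$ into an admissible exponent on an inner function.

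The missing idea is that mere subharmonicity $\Delta v\ge 0$ must be upgraded to the quantitative inequality $\Delta v\ge c\,|\nabla v|^2$ for some $c>0$, and your own computation already contains it: from $\Delta v=F'(u)\Delta u+F''(u)|\nabla u|^2$ and $|\nabla v|^2=F'(u)^2|\nabla u|^2$ one gets $\Delta v\ge \frac{F''(u)}{F'(u)^2}\,|\nabla v|^2$, and the same estimate you used for convexity shows $F''/(F')^2=\bigl(1-\sum_{j}(\prod_{i\le j}\log^{(i)}g)^{-1}\bigr)\prod_{i=1}^{k-1}\log^{(i)}g\ge \bigl(1-\tfrac{1}{e-1}\bigr)>0$ on the relevant range $g\ge e^{(k)}$. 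With this extra gradient term the cutoff argument goes through for every $p>0$: testing against $\varphi_R^2v^{p-1}$ gives
\begin{equation*}
c\int\varphi_R^2v^{p-1}|\nabla v|^2+(p-1)\int\varphi_R^2v^{p-2}|\nabla v|^2\le 2\int\varphi_Rv^{p-1}|\nabla\varphi_R||\nabla v|,
\end{equation*}
and since $v\ge1$ one has $(p-1)v^{p-2}\ge (p-1)v^{p-1}$ when $p<1$, so the left side dominates $(c+p-1)\int\varphi_R^2v^{p-1}|\nabla v|^2$ with $c+p-1>0$; Cauchy--Schwarz and $\int v^p<\infty$ then yield $\nabla v\equiv0$. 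Without isolating this $|\nabla v|^2$ term --- which your write-up discards by estimating each summand of $\Delta v$ merely as nonnegative --- the case $0<p\le1$ remains unproved.
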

\begin{proposition}\label{main2}
Let $L$ is an Einstein manifold and $\varphi:M^m\rightarrow (L^m\times\mathbb{R},g^L+dt^2)$ be a complete biharmonic hypersurface with non-negative Ricci curvature.
Assume that
$$
\int_M H^p dv_g<+\infty,\quad\text{for some}~p>2,
$$
and
\begin{eqnarray*}
\int_M(\log^{(k)}\frac{e^{(k)}}{\theta^2+\varepsilon})^qdv_g<+\infty,
\end{eqnarray*}
for some $q>0$, $k\in \mathbb{N}$ and $\epsilon>0$.
Then, $M$ is minimal, or a vertical cylinder over a biharmonic hypersurface in $(L^m, g^L)$.
\end{proposition}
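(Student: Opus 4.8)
The plan is to reduce everything, in three moves, to the constant mean curvature case already settled in Theorem \ref{ConstH}: first show that $H\theta$ is a constant, then use a Bochner identity for the height function to make $\theta^2+\varepsilon$ superharmonic and invoke the Liouville theorem to force $\theta$ to be constant, and finally run a short dichotomy. Two structural facts are used throughout. By Lemma \ref{ML}, equation $(\ref{HT})$ says that $H\theta$ is a harmonic function on $M$; and from the proof of that lemma the height function $h=\pi_2\circ\varphi$ satisfies $\Delta h=mH\theta$, while $\nabla h=T:=\partial_t^{\top}$, so that $|\nabla h|^{2}=1-\theta^{2}$.

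I would first prove that $H\theta$ is constant. Since $H\theta$ is harmonic, $(H\theta)^{2}$ is a non-negative smooth subharmonic function, and because $|\theta|\le 1$ we have $(H\theta)^{2}\le H^{2}$, whence
\[
\int_M\big((H\theta)^{2}\big)^{p/2}\,dv_g=\int_M|H\theta|^{p}\,dv_g\le\int_M H^{p}\,dv_g<+\infty ,\qquad \tfrac{p}{2}>1 .
\]
Applying Yau's maximum principle (part $(a)$) to $(H\theta)^{2}$ with exponent $p/2>1$ then forces $(H\theta)^{2}$, and hence $H\theta$, to be a constant $c$. This is exactly where the hypothesis $p>2$, rather than merely $p>1$, is needed.

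With $H\theta\equiv c$ the Laplacian $\Delta h=mc$ is constant, so $\nabla(\Delta h)=0$. Substituting this into the Bochner formula ${\textstyle\frac12}\Delta|\nabla h|^{2}=|{\rm Hess}\,h|^{2}+\langle\nabla h,\nabla(\Delta h)\rangle+{\rm Ric}^{M}(\nabla h,\nabla h)$, the cross term drops out, and non-negativity of ${\rm Ric}^{M}$ leaves a non-negative right-hand side. Thus $|\nabla h|^{2}=1-\theta^{2}$ is subharmonic and $\theta^{2}+\varepsilon$ is superharmonic. Since $0<\varepsilon\le\theta^{2}+\varepsilon\le 1+\varepsilon$, the function $u=\theta^{2}+\varepsilon$ lies in $(0,C]$ with $C=1+\varepsilon$, and the second integral hypothesis is, up to the harmless constant factor $C$ inside the iterated logarithm, precisely the finiteness required by Theorem \ref{ThmLM}. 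That theorem then gives that $\theta^{2}+\varepsilon$, hence $\theta^{2}$, is constant; by continuity on the connected $M$, $\theta$ itself is constant.

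It remains to treat the constant $\theta$. If $\theta\equiv 0$ then $\partial_t$ is everywhere tangent, so $M$ is a vertical cylinder over a hypersurface of $L^{m}$ which is biharmonic by Corollary \ref{O10}, exactly as in Theorem \ref{ConstH}. If $\theta\equiv\theta_0\ne 0$, then $H=c/\theta_0$ is constant, so $M$ is a constant mean curvature biharmonic hypersurface and Theorem \ref{ConstH} (where the Einstein condition on $L$ enters) applies: $M$ is minimal or a vertical cylinder, and the latter is impossible since it would force $\theta\equiv0$; hence $M$ is minimal. I expect the decisive step to be the Bochner computation: the point to notice is that the potentially bad cross term $\langle\nabla h,\nabla(\Delta h)\rangle$ is exactly what the constancy of $H\theta$, extracted from the $L^{p}$ bound on $H$, is there to annihilate, so that non-negative Ricci curvature alone delivers the superharmonicity of $\theta^{2}$ that triggers the Liouville theorem. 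Checking that the stated iterated-logarithm integral matches the hypothesis of Theorem \ref{ThmLM} after fixing $C=1+\varepsilon$ is then routine.
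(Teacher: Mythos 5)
Your proof follows essentially the same route as the paper's: Yau's maximum principle applied to the subharmonic function $(H\theta)^2$ (with exponent $p/2>1$) to force $H\theta$ constant, then the Bochner/Ricci-identity computation for the height function to make $\theta^2+\varepsilon$ superharmonic and trigger Theorem \ref{ThmLM}, and finally reduction to Theorem \ref{ConstH}. Your explicit dichotomy on whether the constant $\theta$ vanishes is in fact slightly more careful than the paper's closing step, which infers constancy of $H$ from $H\theta=\bar C$ without separating out the case $\theta\equiv 0$ (where $M$ is directly a vertical cylinder).
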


\begin{proof}%[Proof of Proposition \ref{main2}]
By Lemma \ref{ML}, we have
$$\Delta (H\theta)^2=2|\nabla (H\theta)|^2\geq0.$$
Since $-1\leq\theta\leq1,$
$$\int_M\{(H\theta)^2\}^{q}dv_g\leq\int_MH^{2q}dv_g<+\infty,$$
for $q>1$.
By Yau's Maximum principle, we have $H\theta$ is constant $\bar C$.

By the Ricci identity,
$$\Delta \nabla_i h=\nabla_i\Delta h+{\rm Ric}^M_{ij}\nabla_jh={\rm Ric}^M_{ij}\nabla_jh,$$
where the second equality follows from $\Delta h=mH\theta=m\bar C$. So we have,
$$\langle \Delta T,T\rangle ={\rm Ric}^M(T,T).$$
Hence,
\begin{align*}
-\frac{1}{2}\Delta(\theta^2+\varepsilon)=\frac{1}{2}\Delta|T|^2
=|\nabla T|^2+\langle \Delta T, T\rangle
=|\nabla T|^2+{\rm Ric}^M(T,T)\geq0.
\end{align*}
Therefore $\Delta (\theta^2+\varepsilon)\leq0$, that is, $\theta^2+\varepsilon(>0)$ is a superharmonic function on $M$.
By Theorem \ref{ThmLM}, we obtain $\theta^2+\varepsilon$ is constant. Hence, $\theta$ is constant.
From this and $H\theta=\bar C$, $H$ is constant.
By Theorem \ref{ConstH}, the proof is complete.
\end{proof}
\begin{proposition}\label{main3}
Let $(L^m, g^L)$ is an Einstein manifold with ${\rm Ric}^L=\lambda g^L$ and $\varphi:M^m\rightarrow (L^m\times\mathbb{R}, g^L+dt^2)$ be a complete biharmonic hypersurface with non-negative Ricci curvature.  Assume that

$(i)$ $H$ is harmonic and bounded from below, or

$(ii)$ $\theta$ is harmonic and the scalar curvature of $M$ is constant.

Then $M$ is minimal, or a vertical cylinder over a biharmonic hypersurface in  $(L^m, g^L)$.
\end{proposition}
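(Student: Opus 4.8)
The plan is to reduce both cases to proving that the mean curvature $H$ is constant, after which Theorem \ref{ConstH} immediately yields that $M$ is minimal or a vertical cylinder. The common engine will be part $(b)$ of Yau's maximum principle, applied not directly to $H$ or $\theta$ but to suitable positive shifts of them, which is legitimate because $M$ is complete with non-negative Ricci curvature.

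For case $(i)$ the argument is short: since $H$ is bounded from below, I would pick a constant $c$ with $H + c \geq 1 > 0$, so that $u := H + c$ is a positive harmonic function on the complete, non-negatively Ricci-curved manifold $M$. Yau's principle then forces $u$, hence $H$, to be constant, and Theorem \ref{ConstH} concludes.

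For case $(ii)$ I would first note that $\theta$ is automatically bounded, $-1 \le \theta \le 1$, so $\theta + 2$ is positive and harmonic; Yau's principle makes $\theta$ constant. If $\theta \equiv 0$ then $\partial_t$ is tangent and $M$ is a vertical cylinder, so I may assume $\theta$ is a nonzero constant. Then $\nabla \theta = 0$ and $\Delta\theta = 0$, so identity (\ref{HT}) of Lemma \ref{ML} collapses to $\theta\,\Delta H = 0$, giving $\Delta H = 0$. To now control $H$ itself I would bring in the scalar curvature through the Gauss equation ${\rm Scal}^M = {\rm Scal}^N - 2\,{\rm Ric}^N(\xi,\xi) + m^2H^2 - |A|^2$. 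Using ${\rm Scal}^N = m\lambda$ and ${\rm Ric}^N(\xi,\xi) = \lambda(1-\theta^2)$ (computed in the proof of Theorem \ref{ConstH}), the constancy of both $\theta$ and ${\rm Scal}^M$ forces $m^2H^2 - |A|^2$ to equal a constant $c_0$. Substituting $|A|^2 = m^2H^2 - c_0$ and $\Delta H = 0$ into the first equation of (\ref{BHEq}) produces the pointwise identity $H\bigl(m^2H^2 - c_0 - \lambda(1-\theta^2)\bigr) = 0$.

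The final step is to promote this pointwise dichotomy to global constancy of $H$: at each point either $H = 0$ or $H^2$ equals the fixed number $(c_0 + \lambda(1-\theta^2))/m^2$. Since $H$ is continuous and $M$ is connected, the open set $\{H \neq 0\}$ is also closed — a boundary point would have to satisfy both $H = 0$ and $H^2 = {\rm const} > 0$ — so either $H \equiv 0$ or $H^2$ is that constant everywhere; in both cases $H$ is constant, and Theorem \ref{ConstH} applies (indeed, since here $\theta \neq 0$, it forces $H \equiv 0$, i.e.\ $M$ minimal). I expect the main obstacle to lie precisely in this last connectedness argument together with the careful bookkeeping in the Gauss equation; the rest is a direct use of Yau's principle and the identities already in hand.
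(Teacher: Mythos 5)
Your proposal is correct and follows essentially the same route as the paper: Yau's maximum principle applied to the positive shifts $H+C+\varepsilon$ and $\theta+2$, then Lemma \ref{ML} to get $\Delta H=0$ when $\theta$ is a nonzero constant, and finally the contracted Gauss equation combined with the first biharmonic equation to force $H$ constant. The only (harmless) difference is at the end of case $(ii)$: the paper works on a neighborhood where $H\neq0$ and differentiates the Gauss relation to get $\nabla\,{\rm Scal}^M=2m^2H\nabla H$, whereas you substitute $|A|^2=m^2H^2-c_0$ globally and close the argument with an open--closed connectedness step, which if anything handles the globalization more carefully than the paper does.
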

\begin{proof}
(i) Since $H$ is bounded from below by some constant $-C$, $u=H+C+\varepsilon$ is positive. Since $\Delta u=\Delta H=0$,  by Yau's maximum principle, $u$ is constant. Hence, $H$ is constant. By Theorem \ref{ConstH}, the proof is complete.

(ii) Since $-1\leq\theta\leq1$, $u=\theta+2$ is positive. Since $\Delta u=\Delta \theta=0$, by Yau's maximum principle, $u$ is constant. Hence, $\theta$ is constant. Assume that $\theta\not=0$. By Lemma \ref{ML}, $\Delta H=0.$ By the first equation of (\ref{BHEq}),
$H(|A|^2-\lambda(1-\theta^2))=0.$
Assume that $H\not=0$ at $p\in M$, that is, $H\not=0$ on some
neighborhood $\Omega\ni p$. Then $|A|^2=\lambda(1-\theta^2).$ By
Gauss equation and the relationships between the Ricci curvatures
and scalar curvatures of the hypersurface and the ambient space,
respectively, one gets
$\lambda m={\rm Scal}^M-m^2H^2+3\lambda(1-\theta^2),$
where we used ${\rm Scal}^N=\lambda m.$
So we have
$\nabla\, {\rm Scal}^M=2m^2H\nabla H.$
Since the scalar curvature of $M$ is constant, $H$ is constant. By
Theorem \ref{ConstH}, the proof is complete.
\end{proof}

\section{Biharmonic hypersurfaces in $L^m(c)\times \mathbb R$}

We consider a biharmonic hypersurface $M^m$ in $L^m(c)\times \mathbb
R$, where $L^m(c)$ is the space form $S^m$, $H^m$ or $E^m$ with
constant curvature $c=1$, $-1$ or $0$, respectively.

The Riemannian curvature tensor of $L(c)\times\mathbb{R}$ is given
by
\begin{align}\label{eq:tildeR}
R^N(X,Y)Z=c\{\langle Y,Z\rangle X-\langle X,Z\rangle Y -\langle
Y,\partial_t\rangle\langle Z,\partial_t\rangle X
+\langle X,\partial_t\rangle\langle Z,\partial_t\rangle Y\nonumber\\
+\langle X,Z\rangle\langle Y,\partial_t\rangle \partial_t -\langle
Y,Z\rangle\langle X,\partial_t\rangle \partial_t\},
\end{align}
where $X, Y, Z$ are vector fields on $L(c)\times\mathbb{R}$.

Since $\partial_t$ is a unit vector field globally defined on the
ambient space $L^m(c)\times \mathbb R$, we can decompose it in the
following form
\begin{equation}
\label{eq:dt}
\partial_t=T+\cos\alpha\xi,
\end{equation}
where $\cos\alpha= \langle \partial_t, \xi\rangle$  with $\alpha$ denoting the angle made by $\partial_t$ and the unit normal vector field of the hypersurface, and  $T$ denotes the tangential component of $\partial_t$ along the tangent plane to
$M^m$. Note that here $\cos\alpha=\theta$ related to the notation used in the previous
sections.

For any vector fields $U$, $V$, $W$ tangent to $L^m(c)\times \mathbb
R$, the Codazzi equation is given by
\begin{eqnarray}
\label{eq:Codazzieq} (\nabla_XA)Y-(\nabla_YA)X = c
\cos\alpha(\langle Y,T\rangle X-\langle X,T\rangle Y),
\end{eqnarray}
where $X$ and $Y$ are tangent vector fields on $M^m$.

Since $\partial_t$ is parallel on $L^m(c)\times \mathbb R$, a direct
computation yields
\begin{eqnarray}
\nabla_XT&=&\cos\alpha AX,\label{eq:nablaXT}\\
X(\cos\alpha)&=&-\langle AX,T\rangle, \label{eq:XTheta}
\end{eqnarray}
for every tangent vector field $X$ on $M^m$.

In terms of angle function $\alpha$,  the
biharmonic equations \eqref{BHEq} can be rewritten in the
following form.
\begin{lemma}\label{ML2} A hypersurface  $\varphi:M^{m}\to L^m(c)\times \mathbb{R}$  with mean curvature vector $\eta=H\xi$ is biharmonic if and only if
\begin{equation}\label{BHP1}
\begin{cases}
\Delta H-H [|A|^{2}-c(m-1)\sin^2\alpha ]=0,\\
 A\,(\nabla H) +\frac{m}{2}H \nabla H
+c(m-1)\cos \alpha H T=0,
\end{cases}
\end{equation}
where $T$ is the tangential component of $\partial_t$ and $\cos
\alpha=\theta=\langle \partial_t, \xi\rangle$.
\end{lemma}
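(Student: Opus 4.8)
The plan is to start from the general biharmonic system \eqref{BHEq} and specialize it to the ambient $L^m(c)\times\mathbb R$ by computing explicitly the two ambient-curvature quantities that appear there, namely $\mathrm{Ric}^N(\xi,\xi)$ and $(\mathrm{Ric}^N(\xi))^\top$. Everything reduces to extracting these from the curvature tensor \eqref{eq:tildeR}. First I would compute the Ricci tensor of $N=L^m(c)\times\mathbb R$ by tracing \eqref{eq:tildeR}: choosing an orthonormal frame adapted to the product, the $\partial_t$ direction contributes nothing to the curvature, so one finds $\mathrm{Ric}^N(Z,W)=c(m-1)\bigl(\langle Z,W\rangle-\langle Z,\partial_t\rangle\langle W,\partial_t\rangle\bigr)$, equivalently the Ricci operator is $\mathrm{Ric}^N(Z)=c(m-1)\bigl(Z-\langle Z,\partial_t\rangle\,\partial_t\bigr)$. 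This is the one genuine computation; it is routine but is the crux, since both terms in \eqref{BHEq} are governed by it.

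Next I would evaluate the two needed contractions using the decomposition \eqref{eq:dt}, $\partial_t=T+\cos\alpha\,\xi$. For the scalar term, $\mathrm{Ric}^N(\xi,\xi)=c(m-1)\bigl(\langle\xi,\xi\rangle-\langle\xi,\partial_t\rangle^2\bigr)=c(m-1)(1-\cos^2\alpha)=c(m-1)\sin^2\alpha$, using $|\xi|^2=1$ and $\langle\xi,\partial_t\rangle=\cos\alpha=\theta$. Substituting this into the first equation of \eqref{BHEq} immediately yields the first equation of \eqref{BHP1}. For the second term I would compute $\mathrm{Ric}^N(\xi)=c(m-1)\bigl(\xi-\cos\alpha\,\partial_t\bigr)$ and then take its tangential component along $M$; since the tangential part of $\xi$ is zero and the tangential part of $\partial_t$ is $T$, I get $(\mathrm{Ric}^N(\xi))^\top=-c(m-1)\cos\alpha\,T$. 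Here I should be careful with signs and with the fact that $\frac m2\nabla H^2=mH\nabla H$, so that the factor matches the $\frac m2 H\nabla H$ written in \eqref{BHP1}.

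Finally I would substitute $(\mathrm{Ric}^N(\xi))^\top=-c(m-1)\cos\alpha\,T$ into the second equation of \eqref{BHEq}, namely $2A(\nabla H)+\frac m2\nabla H^2-2H(\mathrm{Ric}^N(\xi))^\top=0$; the last term becomes $+2c(m-1)\cos\alpha\,H\,T$, and after dividing the whole equation by $2$ one obtains exactly $A(\nabla H)+\frac m2 H\nabla H+c(m-1)\cos\alpha\,H\,T=0$, the second line of \eqref{BHP1}. Since every step is reversible, this proves the equivalence in both directions. I expect the only real obstacle to be bookkeeping: correctly tracing \eqref{eq:tildeR} to get the $(m-1)$ factor and keeping the projection onto $TM$ and the sign conventions straight, so that the coefficients $c(m-1)\sin^2\alpha$ and $c(m-1)\cos\alpha$ come out with the precise form stated.
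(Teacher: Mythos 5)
Your proposal is correct and follows essentially the same route as the paper: both reduce the lemma to computing $\mathrm{Ric}^N(\xi,\xi)=c(m-1)\sin^2\alpha$ and $(\mathrm{Ric}^N(\xi))^{\top}=-c(m-1)\cos\alpha\,T$ from \eqref{eq:tildeR} and \eqref{eq:dt} and substituting into \eqref{BHEq}. The only cosmetic difference is that you first assemble the full Ricci operator of the product before contracting, whereas the paper contracts directly over an adapted frame.
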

\begin{proof}
Choose a local orthonormal frame $\{e_i\}, i=1,\ldots,m$ on $M^m$. Then, a straightforward computation using \eqref{eq:tildeR} and \eqref{eq:dt} yields
\begin{eqnarray*}
{\rm Ric}^N(\xi,\xi)=\sum_{i=1}^{m}{\rm R}^N(\xi, e_i,\xi,
e_i)=\sum_{i=1}^{m}\langle{\rm R}^N( e_i, \xi)\xi,
e_i\rangle=c(m-1)\sin^2\alpha,
\end{eqnarray*}
and
\begin{eqnarray*}
({\rm Ric}^N\,(\xi))^{\top}=\sum_{i=1}^{m}\langle{\rm R}^N( e_i,
\xi)e_k, e_i\rangle e_k=-c(m-1)\cos\alpha T.
\end{eqnarray*}
Substituting these into the biharmonic equations
\eqref{BHEq}, we obtain the lemma.
\end{proof}
\begin{remark}
We remark that Equation (\ref{BHP1}) generalizes the  biharmonic
equations for hypersurfaces in a Euclidean space, which is
useful in the study any biharmonic hypersurfaces in $L^m(c)\times
\mathbb{R}$, including rotation hypersurfaces, semi-parallel or more
general ones.
\end{remark}
Now we are ready to give a complete classification of constant angle biharmonic surfaces in $S^2\times\mathbb R$ and
$H^2\times\mathbb R$.
\begin{theorem}\label{MT}
The only constant angle proper biharmonic surface in
$S^2\times\mathbb R$ and $H^2\times\mathbb R$ is an open part of
$S^1(1/\sqrt2)\times\mathbb R$.
\end{theorem}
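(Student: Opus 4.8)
The plan is to specialize the biharmonic equations \eqref{BHP1} of Lemma \ref{ML2} to a surface ($m=2$) in $L^2(c)\times\mathbb R$ with $c=\pm1$, and exploit the constant angle hypothesis to reduce everything to an algebraic problem. Setting $m=2$, the second equation becomes $A(\nabla H)+H\nabla H+c\cos\alpha\,H\,T=0$, and the first becomes $\Delta H-H[|A|^2-c\sin^2\alpha]=0$. Since $\alpha$ is constant, equation \eqref{eq:XTheta} gives $\langle AX,T\rangle=0$ for all $X$, so $T$ is a null direction of $A$, i.e.\ $AT=0$ whenever $T\neq0$. I would split the argument according to whether $\cos\alpha=0$ (so $T$ is a unit tangent field and $\partial_t$ is tangent, making $M$ a vertical cylinder) or $\cos\alpha\neq0$.

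In the generic case $\cos\alpha\neq 0$, the relation $AT=0$ forces $T$ to be a principal direction with principal curvature $0$, and since $T\neq0$ (as $\alpha$ is not $\pi/2$) we get an eigenvalue of $A$ equal to zero. For a surface, $A$ then has principal curvatures $0$ and $2H$ (trace $=2H$ with our convention), so $|A|^2=4H^2$. I expect the constant angle condition together with \eqref{eq:nablaXT} to propagate rigidity: differentiating $AT=0$ and using the Codazzi equation \eqref{eq:Codazzieq} should show $H$ is forced to be constant, after which Theorem \ref{ConstH} (with $\lambda=c$, since a space form is Einstein) pins down the geometry. One must then check that the vertical-cylinder solution produced is \emph{proper} biharmonic, and whether the $\cos\alpha\neq0$ branch yields any proper biharmonic example at all; I anticipate it does not, leaving only the $\cos\alpha=0$ case.

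When $\cos\alpha=0$, the surface is a vertical cylinder over a curve in $L^2(c)$. Here the biharmonic equation collapses: $T$ is a unit parallel tangent field (by \eqref{eq:nablaXT} with $\cos\alpha=0$), the cylinder direction is flat and totally geodesic, and biharmonicity of $\varphi$ reduces by Corollary \ref{O10} to biharmonicity of the generating curve $\gamma$ in the space form $L^2(c)$. A proper biharmonic curve in $S^2$ is a circle of geodesic curvature $1$, giving the radius $1/\sqrt2$ factor; in $H^2$ (curvature $c=-1$) the biharmonic curve equation $\kappa''+\kappa(\ldots)=0$ has no proper solution, which is why $H^2\times\mathbb R$ contributes nothing and the statement names only $S^1(1/\sqrt2)\times\mathbb R$. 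Identifying the correct normalization giving $S^1(1/\sqrt2)$ is a direct computation of the mean curvature of the cylinder.

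The main obstacle I foresee is the $\cos\alpha\neq0$ branch: I must rule out proper biharmonic surfaces there rather than merely describe them. The delicate point is showing $H$ is constant in that branch without circular reasoning, since $T$ being a null eigendirection does not immediately force constancy of $H$; this requires carefully combining the derivative of $AT=0$, the Codazzi relation, and the second biharmonic equation, and then invoking the constant-angle hypothesis to eliminate the remaining freedom. Once $H$ is shown constant, Theorem \ref{ConstH} does the rest, but verifying that the resulting cylinder over a biharmonic curve in $S^2$ is precisely $S^1(1/\sqrt2)\times\mathbb R$ and that $H^2\times\mathbb R$ admits no proper example is the computational heart of the proof.
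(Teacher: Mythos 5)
Your proposal follows essentially the same route as the paper: the constant-angle hypothesis together with \eqref{eq:XTheta} forces $T$ to be a principal direction with principal curvature zero, the Codazzi equation combined with the tangential biharmonic equation then forces $H$ to be constant, and the CMC classification (your Theorem \ref{ConstH} plus biharmonic-curve analysis, or equivalently the result of \cite{OW} that the paper cites) finishes the argument. The only slip is the parenthetical ``$T\neq0$ as $\alpha$ is not $\pi/2$'': in fact $T=0$ corresponds to $\sin\alpha=0$, i.e.\ a totally geodesic slice, which is excluded simply because it is minimal.
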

\begin{proof}
 Choose a suitable frame $\{e_1,e_2,\xi\}$ so that the shape operator is diagonalized
with $Ae_1=\lambda_1e_1$ and $Ae_2=\lambda_2e_2$.  The fact $\langle
T,T\rangle=\sin^2\alpha$ implies that
\begin{eqnarray}\label{eq:T}
T=\sin\alpha(\cos fe_1+\sin f e_2)
\end{eqnarray}
for some smooth function $f$ on $M$. With this orthonormal frame
$\{e_1,e_2,\xi\}$ and  \eqref{eq:XTheta}, we have
\begin{eqnarray}\label{eq:theta}
e_1(\alpha)=\lambda_1\cos f,\quad e_2(\alpha)=\lambda_2\sin f.
\end{eqnarray}
Setting
\begin{eqnarray}\label{eq:nabla}
\nabla_{X}e_2=\omega(X)e_1,\quad\nabla_{X}e_1=-\omega(X)e_2,
\end{eqnarray}
 and substituting \eqref{eq:T} into \eqref{eq:nablaXT}, we find
\begin{eqnarray}\label{eq:omega}
\omega(e_1)=e_1(f)+\lambda_1\cot\alpha\sin f,\quad
\omega(e_2)=e_2(f)-\lambda_2\cot\alpha\cos f.
\end{eqnarray}
 Thus, the biharmonic equations
\eqref{BHP1} becomes
\begin{equation}\label{BhE3}
\begin{cases}
\Delta H-H |A|^{2}+c\sin^2\alpha H=0,\\
 A\,({\rm grad}\,H) +H {\rm grad}\, H
+c\cos\alpha\sin\alpha\, H \,(\cos f e_1+\sin f e_2)=0.
\end{cases}
\end{equation}
On the other hand,  from Codazzi equation we have
\begin{equation}\label{Codazzi}
\begin{cases}
e_1(\lambda_2)=(\lambda_2-\lambda_1)\omega(e_2)-c\sin\alpha\cos\alpha\cos f,\\
e_2(\lambda_1)=(\lambda_2-\lambda_1)\omega(e_1)-c\sin\alpha\cos\alpha\sin f.
\end{cases}
\end{equation}
Also, from the second equation of (\ref{BhE3}) we have
\begin{equation}\label{Bih2}
\begin{cases}
(\lambda_1+H)e_1(H)+c\sin\alpha\cos\alpha\cos fH=0,\\
(\lambda_2+H)e_2(H)+c\sin\alpha\cos\alpha\sin fH=0.
\end{cases}
\end{equation}
Since the surface is proper biharmonic, it is not minimal.  Using this, together with the assumption that the
angle function $\alpha$ is always constant and
\eqref{eq:theta}, we conclude that either $\lambda_1=\sin f=0$ or $\lambda_2=\cos
f=0$. For the first case, we use \eqref{eq:omega}, \eqref{Codazzi} and \eqref{Bih2} to have
\begin{eqnarray*}
&&e_1(\lambda_2)=-\lambda_2^2\cot\alpha-c\sin\alpha\cos\alpha,\\
&&e_1(\lambda_2)=-2c\sin\alpha\cos\alpha,
\end{eqnarray*}
which yield that $\lambda_2=c\sin^2\alpha$ and hence $\lambda_2$ is
a constant. Consequently, the mean curvature $H=\lambda_2/2$ is constant. Similarly, one can check that
the second case also leads to  constant mean curvature
$H$. So, in either case, we can use the classification of constant mean curvature biharmonic surfaces in $S^2\times\mathbb R$ and $H^2\times\mathbb R$ given in \cite{OW} to conclude.
\end{proof}
In the following, we will study the biharmonicity of the rotation hypersurfaces $M^m$
in $S^m\times \mathbb R$ defined by Dillen et al. (c.f. \cite{DFV}).
Parametrizing the profile curve as
$$\gamma(s)=(\cos s, 0,\ldots, 0, \sin s, h(s)),$$
for some smooth function $a$,  then the parametrization of the
rotation hypersurface can be written as
\begin{align}\notag
& f(s, v_1, \ldots, v_{m-1})\\\label{Rotm}&=\big(\cos s, \varphi_1(v_1, \ldots, v_{m-1})\sin s, \ldots,
\varphi_n(v_1, \ldots, v_{m-1})\sin s, h(s)\big),
\end{align}
where $\varphi=(\varphi_1, \ldots, \varphi_m)$ is an orthogonal parametrization of the unit sphere $S^{m-1}$ in $\mathbb R^m$. 

Our result can be stated as
\begin{theorem}\label{RE1}
A rotation hypersurface in $S^m\times \r$ defined in (\ref{Rotm}) is biharmonic if  its mean curvature function $H$ solves the equations
\begin{eqnarray}\label{54}
&&\Big(3u'+(m-1)u\cot s\Big)H'+2(m-1)u H=0,
\\\label{55} &&(1-u^2)H''+\Big((m-1)(1-u^2)\cot s-uu'\Big)H'
\\&&+\Big((m-1)u^2(1-\cot^2s)-u'^2\Big)H=0.\nonumber
\end{eqnarray}
where $u=-\sin\alpha$ and 
$H=\frac{1}{m}\big(u'+(m-1)u\cot s\big)$.
\end{theorem}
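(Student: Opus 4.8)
The plan is to specialize the biharmonic system \eqref{BHP1} of Lemma \ref{ML2} (with $c=1$) to the rotation hypersurface \eqref{Rotm} and show that it collapses exactly to the pair \eqref{54}--\eqref{55}. The first task is to record the relevant geometry of \eqref{Rotm}. Differentiating $f$ in $s$ and in the spherical variables $v_i$ and using $|\varphi|^2=1$ and $\langle\varphi,\varphi_{v_i}\rangle=0$, I would find the rotationally symmetric induced metric
\begin{equation}\notag
g=(1+h'^2)\,ds^2+\sin^2 s\,g^{S^{m-1}},
\end{equation}
together with the unit normal $\xi=\frac{1}{\sqrt{1+h'^2}}(h'\sin s,-h'\cos s\,\varphi,1)$ of $M$ in $S^m\times\mathbb R$. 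Both the coordinate field $\partial_s$ and the spherical fields $\partial_{v_i}$ are principal directions, so the shape operator is diagonal. A direct computation of the second fundamental form gives the profile principal curvature $\lambda_1=u'$ (multiplicity $1$) and the rotational principal curvature $\lambda_2=u\cot s$ (multiplicity $m-1$), where $u=h'/\sqrt{1+h'^2}$; this is the $u$ of the statement, since the convention $u=-\sin\alpha$ together with $\cos\alpha=\langle\xi,\partial_t\rangle=1/\sqrt{1+h'^2}$ gives $\cos^2\alpha=1-u^2$ and $\sin^2\alpha=u^2$. In particular $H=\frac1m(\lambda_1+(m-1)\lambda_2)=\frac1m\big(u'+(m-1)u\cot s\big)$, matching the normalization stated, and $|A|^2=u'^2+(m-1)u^2\cot^2 s$.

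Next I would dispose of the gradient equation, the second line of \eqref{BHP1}. Writing $e_1=\partial_s/\sqrt{1+h'^2}$ for the unit profile direction, one checks from \eqref{eq:dt} that the tangential part of $\partial_t$ is $T=u\,e_1$. Because $M$ is rotationally symmetric, $H=H(s)$ and $\nabla H=e_1(H)\,e_1$ points along the principal direction $e_1$, so $A(\nabla H)=\lambda_1\nabla H=u'\nabla H$. Substituting $A(\nabla H)$, $\nabla H$ and $T=u\,e_1$ into the second equation of \eqref{BHP1}, every term is a multiple of $e_1$; after factoring out the common $\cos\alpha=\sqrt{1-u^2}$ and replacing $mH$ by $u'+(m-1)u\cot s$, the resulting scalar coefficient reduces precisely to \eqref{54}.

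It remains to treat the scalar equation, the first line of \eqref{BHP1}, and this is where the real computation lies. The key step is to evaluate $\Delta H$ for $H=H(s)$ on the metric $g$ above. Using $\Delta H=\frac{1}{\sqrt{\det g}}\,\partial_s\big(\sqrt{\det g}\,g^{ss}\,H'\big)$, with $\sqrt{\det g}$ proportional to $\sqrt{1+h'^2}\,\sin^{m-1}s$ and $g^{ss}=(1+h'^2)^{-1}$, and rewriting $1/\sqrt{1+h'^2}=\sqrt{1-u^2}$, I expect
\begin{equation}\notag
\Delta H=(1-u^2)H''+\big[(m-1)(1-u^2)\cot s-uu'\big]H'.
\end{equation}
Feeding this, together with $|A|^2=u'^2+(m-1)u^2\cot^2 s$ and $\sin^2\alpha=u^2$, into $\Delta H-H[\,|A|^2-(m-1)\sin^2\alpha\,]=0$ produces \eqref{55} once the coefficient of $H$ is collected as $(m-1)u^2(1-\cot^2 s)-u'^2$. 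The main obstacle is therefore the bookkeeping in the $\Delta H$ computation and the careful handling of the sign conventions relating $u$, $h'$, $\alpha$, $T$ and the two principal curvatures; once the identities $\lambda_1=u'$, $\lambda_2=u\cot s$, $T=u\,e_1$, $\cos^2\alpha=1-u^2$ are in hand, both biharmonic equations match \eqref{54}--\eqref{55} by direct substitution, which gives the asserted sufficient condition for biharmonicity.
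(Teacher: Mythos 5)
Your proposal is correct and follows essentially the same route as the paper: specialize the system \eqref{BHP1} to the rotation hypersurface via its principal curvatures $\lambda_1=u'$, $\lambda_2=u\cot s$ and the relation $T=u\,e_1$, then substitute $\Delta H$ and $|A|^2$ to obtain \eqref{54}--\eqref{55}. The only cosmetic differences are your choice of the opposite unit normal (which leaves the equations invariant since they are homogeneous of even degree in $(u,H)$) and your computation of $\Delta H$ from the coordinate expression of the induced metric rather than from the orthonormal frame identity $\langle e_1,\nabla_{e_j}e_j\rangle=-\mu\cot\alpha$ used in the paper; both yield $\Delta H=(1-u^2)H''+[(m-1)(1-u^2)\cot s-uu']H'$.
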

\begin{proof}
As in \cite{DFV}, we  choose an orthonormal frame
\begin{equation}\notag
e_1=\frac{1}{\sqrt{1+h'(s)^2}}df(\partial_
s),\;\;e_i=\frac{1}{\sqrt{\sin^2
s(\sum_{k=1}^{m}\frac{\partial \varphi_k}{\partial
v_i})}}df(\partial_i),\quad 2\leq i\leq m.
\end{equation}
on the rotation hypersurface with the unit normal vector field\\ $\xi=\frac{1}{\sqrt{1+h'(s)^2}}(-h'(s)\sin s, \varphi_1h'(s)\cos s ,
\ldots, \varphi_m h'(s)\cos s, -1)$ so that 
\begin{eqnarray}\notag
&& A(e_1)=\lambda e_1, \;\;\lambda=-\frac{h''(s)}{(1+h'(s)^2)^{3/2}},\label{lambda}\\\notag
&&A(e_j)=\mu e_j,\;\;\;\mu=-\frac{h'(s)\cot s}{(1+h'(s)^2)^{1/2}}, \quad j=2.
\ldots, m,\label{mu}
\end{eqnarray}
It is easy to check that
\begin{align}\notag
\cos\alpha(s) =&\langle \xi, \partial_t\rangle=-\frac{1}{\sqrt{1+h'(s)^2}},\;\;\sin\alpha(s) =\frac{h'(s)}{\sqrt{1+h'(s)^2}},\\\notag
\lambda=&-\alpha'(s)\cos\alpha,\quad \mu=-\sin\alpha\cot s,\label{alpha6}\;\; {\rm and}\\\notag
 H=&-\frac{1}{m}\big(\alpha'(s)\cos\alpha+(m-1)\sin\alpha\cot s\big).
\end{align}
A further computation using the fact that  $e_i(H)=e_i(\lambda)=e_i(\mu)=0$ for $2\leq i\leq m$, and that
$\nabla H=\sum_{i=1}^m e_i(H)e_i$, we can rewrite the second
equation of \eqref{BHP1} as
\begin{eqnarray}\label{T}
e_1(H)\lambda e_1+\frac{m}{2}He_1(H)e_1+(m-1)\cos\alpha HT=0.
\end{eqnarray}
It follows that if $H\ne 0$ in an open set, then $T$ is proportional to $e_1$, which, together with the fact that
$\langle T, T\rangle=\sin^2\alpha$, allows us to write 
\begin{equation}\label{T-sin}
T=\sin\alpha e_1.
\end{equation}
So, (\ref{T}) becomes 
\begin{eqnarray}\label{eq:Main1}
(\frac{m}{2}H+\lambda)e_1(H)=-(m-1)\sin\alpha\cos\alpha H.
\end{eqnarray}
To compute the the term $\Delta H$,  we first use (\ref{eq:nablaXT}) to compute
\begin{align}\label{521}
 \mu\cos \alpha =&\langle \nabla_{e_j} T, e_j\rangle =e_j\langle  T, e_j\rangle -\langle T, \nabla_{e_j} e_j\rangle \\\notag
=&-\sin \alpha \langle e_1, \nabla_{e_j} e_j\rangle.
\end{align}
It follows that
\begin{align}\label{522}
\langle e_1, \nabla_{e_j} e_j\rangle=- \mu\cot \alpha,
\end{align}
from which we have
\begin{eqnarray}\notag
&&\Delta H=\sum_{i=1}^m \big(e_ie_i(H)-\langle\nabla_{e_i}e_i, e_1\rangle e_1(H)\big)\\\label{523}
&&=e_1e_1(H)+(m-1)\cot\alpha\mu e_1(H),
\end{eqnarray}
By using this and  $|A|^{2}=\lambda^2+(m-1)\mu^2$, we can rewrite the first equation of \eqref{BHP1} as
\begin{eqnarray}\label{51}
e_1e_1(H)+(m-1)\cot\alpha\mu
e_1(H)-H(\lambda^2+(m-1)\mu^2)+\sin^2\alpha H=0.
\end{eqnarray}
Finally, by a change of variable 
\begin{equation}\label{change}
u=-\sin \alpha(s),
\end{equation}
 and using
\begin{eqnarray}\label{53}
&&\lambda=u',\quad \mu=u\cot s,\nonumber\\
&&H=\frac{1}{m}\big(u'+(m-1)u\cot s\big), \nonumber\\
&&e_1(H)=-\cos\alpha H',\nonumber
\end{eqnarray}
in Equations (\ref{eq:Main1}) and  (\ref{51}), we obtain the two equations stated in the theorem.
\end{proof}
%%%%%%%%%%%%%%%%%%%%%%%%%%%%%%%%%

\section{Semi-parallel biharmonic hypersurfaces in $L^m(c)\times \mathbb R$}

In this section, we first give a complete classification for totally umbilical biharmonic hypersuraces in $L^m(c)\times\mathbb{R}$, where
$L^m(1)=S^m$ and $L^m(-1)=H^m$, then we use the results to classify semi-parallel biharmonic hypersurfaces in such spaces. For the existence of general  totally umbilical hypersurfaces in $L^m\times \mathbb{R}$ see \cite{SV}, and for the study of totally umbilical hypersurfaces in  $L^m(c)\times\mathbb{R}$ see \cite{CKV}, \cite{VV} and \cite{MT14}. 
\begin{theorem}\label{umbbih}
Any totally umbilical biharmonic hypersurface in $L^m(c)\times\mathbb{R}$ is minimal.
\end{theorem}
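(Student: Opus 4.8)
The plan is to exploit that total umbilicity, $A = H\,{\rm Id}$, collapses the two biharmonic equations of Lemma \ref{ML2} together with the ambient structure equations into a single scalar identity that forces $H\equiv 0$. A pleasant feature I expect is that the argument works uniformly in $m$, in particular without invoking the constant-mean-curvature result for $m\neq 4$ quoted just before Corollary \ref{umb}.

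First I would record the consequences of $A=H\,{\rm Id}$. Since $(\nabla_X A)Y = X(H)Y$ in this case, the Codazzi equation \eqref{eq:Codazzieq} reads $X(H)Y - Y(H)X = c\cos\alpha(\langle Y,T\rangle X - \langle X,T\rangle Y)$; pairing this against a frame (here I use $m\ge 2$, so that $\beta(X)Y=\beta(Y)X$ for all $X,Y$ forces the one-form $\beta$ to vanish) is equivalent to the gradient identity $\nabla H = -c\cos\alpha\,T$. In the same spirit, \eqref{eq:XTheta} and \eqref{eq:nablaXT} specialize to $\nabla(\cos\alpha) = -HT$ and $\nabla_X T = \cos\alpha\,H\,X$, whence ${\rm div}\,T = mH\cos\alpha$. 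Using these I would compute $\Delta H = {\rm div}(\nabla H)={\rm div}(-c\cos\alpha\,T)$ in closed form, obtaining $\Delta H = -cH(m\cos^2\alpha - \sin^2\alpha)$, while $|A|^2=mH^2$. Substituting into the first equation of \eqref{BHP1} and simplifying, the whole $m$-dependence cancels except for an overall factor, leaving the pointwise identity
\[
mH\big[c(\sin^2\alpha - \cos^2\alpha) - H^2\big] = 0.
\]

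Then I would argue by contradiction: suppose $H\not\equiv 0$ and let $U$ be a connected component of $\{H\neq 0\}$, on which $H^2 = c(1 - 2\cos^2\alpha)$. If $c=0$ this already gives $H\equiv 0$ on $U$, a contradiction. If $c=\pm 1$, differentiating the relation $H^2=c(1-2\cos^2\alpha)$ and comparing with $\nabla(H^2)=2H\nabla H=-2cH\cos\alpha\,T$ yields $cH\cos\alpha\,T=0$, hence $\cos\alpha\,T\equiv 0$ on $U$. Since $|T|^2=\sin^2\alpha$, at each point $\cos^2\alpha\in\{0,1\}$, so by continuity $\cos^2\alpha$ is constant on $U$. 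If $\cos^2\alpha\equiv 0$, then $\nabla(\cos\alpha)=-HT=0$ together with $|T|^2=1$ forces $H=0$; if $\cos^2\alpha\equiv 1$, then $T\equiv 0$, so $\nabla_X T = \cos\alpha\,H\,X=0$ forces $H=0$. Either way this contradicts $H\neq 0$ on $U$, and I conclude $H\equiv 0$, i.e.\ $M$ is minimal.

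The decisive step is the second one: turning the tensorial Codazzi equation into the clean identity $\nabla H = -c\cos\alpha\,T$, which is precisely what makes $\Delta H$ computable and produces the factored scalar equation above. After that the remaining delicate point is geometric rather than computational, namely ruling out the two borderline configurations $\cos^2\alpha\equiv 1$ (a slice) and $\cos^2\alpha\equiv 0$ (a vertical cylinder); handling these by returning to the structure equations \eqref{eq:nablaXT} and \eqref{eq:XTheta} is exactly where this self-contained argument gains over merely quoting the $m\neq 4$ constant-mean-curvature theorem, since it disposes of the exceptional dimension $m=4$ at no extra cost.
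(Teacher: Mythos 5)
Your proof is correct, but it takes a genuinely different route from the paper's. The paper reduces everything to showing that $H$ is constant and then invokes Theorem \ref{ConstH} (a vertical cylinder not being umbilical): for $m\neq 4$ it quotes the known fact that totally umbilical biharmonic submanifolds have constant mean curvature, and for the exceptional dimension $m=4$ it runs a separate ODE analysis in adapted coordinates, playing the tangential biharmonic equation against the sine--Gordon equation satisfied by $\phi=2\alpha$. You instead extract $\nabla H=-c\cos\alpha\,T$ directly from the Codazzi equation \eqref{eq:Codazzieq} under $A=H\,{\rm Id}$ (the paper itself notes in a remark that its equation \eqref{BHEq2T} is just Codazzi, but does not exploit this to bypass the case split), then use \eqref{eq:XTheta} and \eqref{eq:nablaXT} to compute $\Delta H=-cH(m\cos^2\alpha-\sin^2\alpha)$ and feed only the \emph{normal} equation of \eqref{BHP1}, obtaining the pointwise relation $H^2=c(1-2\cos^2\alpha)$ on $\{H\neq0\}$; differentiating it against $\nabla H=-c\cos\alpha\,T$ and $\nabla(\cos\alpha)=-HT$ forces $\cos^2\alpha\in\{0,1\}$, and both degenerate configurations are correctly killed by the structure equations. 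I checked the computations ($\operatorname{div}T=mH\cos\alpha$, $|A|^2=mH^2$, the factorization $mH[c(\sin^2\alpha-\cos^2\alpha)-H^2]=0$, and the sign of the Codazzi identity, which matches the paper's \eqref{BHEq2T}) and they are all right. What your argument buys is uniformity in $m\ge2$ and self-containedness: no appeal to the constant-mean-curvature classification, no special treatment of $m=4$, and no use of the tangential biharmonic equation at all, so you in fact prove the slightly stronger statement that a totally umbilical hypersurface satisfying just the normal equation of \eqref{BHP1} is minimal. What the paper's route buys is brevity on the page, given that it is willing to quote the external results.
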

\begin{proof}
As we have seen in Theorem \ref{ConstH} (also \cite{FOR}) a constant mean curvature hypersurface in $L^m(c)\times\mathbb{R}$ is biharmonic if and only if it is minimal or a vertical cylinder over a biharmonic hypersurface. Since a vertical cylinder is not totally umbilical, it is enough to show that a totally umbilical biharmonic hypersurface in $L^m(c)\times\mathbb{R}$ has constant mean curvature.  Since a totally umbilical biharmonic hypersurface of dimension $m\ne 4$ always has constant mean curvature, we only need to do the proof for the case of $m=4$. In this case, the two equations of \eqref{BHP1} read
\begin{equation}\label{BHEq2}
 \left\{
 \begin{aligned}
 &\nabla H +c\cos\alpha T=0,\\
 &\Delta H -4H^3+3c\sin^2\alpha H=0.
 \end{aligned}
\right.
\end{equation}
If $H\equiv0$, then the proof completes. Otherwise, we assume that $H\not=0$ on an open
set $\Omega$, and we will consider the equations on $\Omega$.

If $\sin\alpha\equiv0$, then $\cos\alpha=\pm1$. By \eqref{eq:XTheta} and the first equation of \eqref{BHEq2},
we have $H|\nabla H|^2=0$, which implies that $|\nabla H|^2=0$ and hence $H$ is non-zero constant by the assumption  that $H\ne 0$ on $\Omega$.

Now if $\sin\alpha\not=0$ at a point $p\in M^4$. Then, it is
shown (cf.\cite{VV} and \cite{CKV}) that there exists local coordinates $(u,v_1, v_2, v_{3})$
on an open neighborhood of $p$ such that
\begin{align}\notag
\partial_u=\frac{1}{\sin\alpha}T,\qquad\partial_u\perp\partial_{v_i},
\qquad \partial_{v_i}H=\partial_{v_i}\alpha=0,
\end{align}
and $\phi:=2\alpha$ solving the Sine-Gordon equation
\begin{equation}\label{SG}
\phi''+c\sin\phi=0.
\end{equation}
First, we note, by using \eqref{eq:XTheta}, that $\alpha'=H$. Since
$\nabla H=\partial_u H\,\partial_u$, the first equation of
\eqref{BHEq2} reads
\begin{equation}\label{BHEq2T}
H'=-c\cos\alpha\sin\alpha.
\end{equation}
Second, recalling that $|T|^2=\sin \alpha$ we see that the existence of the local coordinates $(u,v_1, v_2, v_{3})$ implies the existence of a local orthonormal frame 
$\{e_1, e_2, e_3, e_4\}$ on $M^4$ such that $e_1=\partial_u$ and hence $T=\sin \alpha e_1$,  $e_j(\alpha)=0$ for $j\ge 2$, and that
\begin{equation}\notag
A(e_i)=H e_1, \;\; i=1, 2, 3, 4.
\end{equation}
Now, exactly as in the calculations of (\ref{521}), (\ref{522}), and (\ref{523}), we have
\begin{align*}
\Delta H=&H''+3\cot\alpha HH'.
\end{align*}
Therefore, the second equation of \eqref{BHEq2} becomes
\begin{align}\label{BHEq2N}
H''+3\cot\alpha H H'-4H^3+3c\sin^2\alpha H=0.
\end{align}
Differentiating \eqref{BHEq2T} and combining the resulting equation with 
\eqref{BHEq2N} yields
\begin{align}\label{524}
-\alpha'(4c\cos(2\alpha)+(2\alpha')^2)=0.
\end{align}
If $\alpha'\equiv0$, then $H=\alpha'\equiv0$ and we have a contradiction.  If otherwise, we consider equations on some neighborhood $\Omega$ on which $\alpha'\not=0.$
Denoting $\phi:=2\alpha$, Equation (\ref{524}) reads
$$(\phi')^2+4c\cos\phi=0.$$
By differentiating this and combining the resulting equation with \eqref{SG}, we obtain
$(\cos\phi)'=0$, and hence $\alpha'=0$, which is a contradiction.

Summarizing the above discussion we obtain the conclusion about the case of ${S}^m\times\mathbb{R}$.

Finally, we can check that  an argument similar to the above works for the case of ${H}^m\times\mathbb{R}$.
\end{proof}
\begin{remark}
Note that Theorem \ref{umbbih} implies that there is no totally umbilical proper biharmonic hypersurface in the conformally flat space $L^m(c)\times\mathbb{R}$. However, it was proved in \cite{OT} that there are many totally umbilical proper biharmonic hypersurfaces (with constant mean curvature) in other conformally flat spaces.  Also, we would like to point out that Theorem \ref{umbbih} holds for the conformally flat space $L^m(c)\times\mathbb{R}$, but it cannot be generalized to a general conformally flat space. This is evident by Example 1 in \cite{LO} where many examples of totally umbilical proper biharmonic hypersurfaces of dimension $4$ with non-constant mean curvature are constructed in a conformally flat space.
\end{remark}
Now we are ready to give a classification of  semi-parallel biharmonic hypersurfacers in $L^m(c)\times\mathbb{R}$.
\begin{theorem}\label{semibih}
(i) Any semi-parallel biharmonic  hypersurface in $S^m\times\mathbb{R}$ for $m\ge 3$ is minimal or a vertical cylinder over a biharmonic hypersurface in 
$S^m$;\\
(ii) Any semi-parallel biharmonic  hypersurface in $H^m\times\mathbb{R}$ for $m\ge 3$ is minimal.
\end{theorem}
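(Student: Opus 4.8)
The plan is to play the known structural classification of semi-parallel hypersurfaces off against the biharmonicity results already established. By the classification of \cite{VV}, for $m\ge 3$ a semi-parallel hypersurface of $L^m(c)\times\mathbb{R}$ is, up to an open dense subset, one of: a slice $L^m(c)\times\{t_0\}$; a totally umbilical hypersurface; a vertical cylinder $N^{m-1}\times\mathbb{R}$ over a semi-parallel hypersurface $N^{m-1}$ of $L^m(c)$; or a normally flat hypersurface on which the tangent part $T$ of $\partial_t$ is a principal direction and which carries exactly two distinct principal curvatures. I would dispose of these cases one at a time.

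The first three are immediate from Sections 2 and 4. A slice is totally geodesic, hence minimal, and a totally umbilical biharmonic hypersurface is minimal by Theorem \ref{umbbih}. For a vertical cylinder, Corollary \ref{O10} shows that $N^{m-1}\times\mathbb{R}$ is biharmonic exactly when $N^{m-1}$ is biharmonic in $L^m(c)$; when $c=1$ this is precisely the cylinder in part (i). When $c=-1$, $N^{m-1}$ is a semi-parallel biharmonic hypersurface of $H^m$. A semi-parallel hypersurface of a space form has at most two distinct principal curvatures, and a biharmonic hypersurface of $H^m$ with at most two distinct principal curvatures is minimal; hence the cylinder is minimal. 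This produces the sharper conclusion of part (ii), and it is the only place where the sign of $c$ is used.

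The substantive case is the last one. Writing $T=\sin\alpha\,e_1$ with $Ae_1=\lambda e_1$ and $Ae_j=\mu e_j$ for $2\le j\le m$, exactly as in the proof of Theorem \ref{RE1}, I would first extract the semi-parallel constraint. Since $R\cdot A=0$ forces the intrinsic sectional curvature of any plane spanned by principal directions with distinct principal curvatures to vanish, and since the Gauss equation together with \eqref{eq:tildeR} gives $K^M(e_1,e_j)=c\cos^2\alpha+\lambda\mu$, the case $\lambda\ne\mu$ yields the algebraic relation $\lambda\mu+c\cos^2\alpha=0$. I would then substitute this relation, together with $|A|^2=\lambda^2+(m-1)\mu^2$, into the reduced biharmonic system \eqref{54}--\eqref{55} of Theorem \ref{RE1} (and its $H^m$ analogue). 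The expectation is that the extra relation collapses the second-order equation \eqref{55} to a first-order, then algebraic, condition that forces $H$ to be constant; Theorem \ref{ConstH} then returns a minimal hypersurface or a vertical cylinder, closing part (i), while in the $H^m$ case the curvature sign again rules out the non-minimal cylinder.

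I expect the elimination of this genuine two-curvature family to be the main obstacle. The delicate point is the bookkeeping: one must verify that $R\cdot A=0$ really reduces to the single scalar relation $\lambda\mu+c\cos^2\alpha=0$ when the angle function is non-constant, and that coupling this relation with the Codazzi equations \eqref{eq:Codazzieq} and the structure equations \eqref{eq:nablaXT}--\eqref{eq:XTheta} is consistent only when $H$ is constant (the degenerate sub-case $\lambda\equiv\mu$ being the totally umbilical one already handled). This is of the same flavour as the Sine-Gordon computation in the proof of Theorem \ref{umbbih}: differentiate the scalar relations, feed in the structure equations, and read off a contradiction with $\alpha'\not\equiv0$.
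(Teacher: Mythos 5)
Your proposal follows essentially the same route as the paper: invoke the Van der Veken--Vrancken / Calvaruso--Kowalczyk--Van der Veken classification of semi-parallel hypersurfaces, dispose of the totally umbilical case by Theorem \ref{umbbih} and the vertical cylinders by Corollary \ref{O10}, and kill the remaining two-curvature rotation family by feeding the relation $\lambda\mu+c\cos^2\alpha=0$ into the ODE system of Theorem \ref{RE1}. The only cosmetic difference is in the endgame of that last case: the paper integrates $uu'\cot s=u^2-1$ explicitly to $u=\pm\sqrt{1+C\sec^2 s}$ and checks directly that this is incompatible with \eqref{54} (outright non-existence), rather than first forcing $H$ constant, but this is the same computation.
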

\begin{proof}
For Statement (i), we know from  \cite{VV}  that a semi-parallel hypersurface $M^m$ in
$S^m\times\mathbb{R}$ is one of the as following:
(I) $m=2$ and $M^2$ is flat;\\
(II) $M$ is totally umbilical;\\
(III) $M$ is an open part of rotation hypersurface with $\lambda\mu=-\cos^2\alpha$, or\\
(IV) $M^m\subset \bar M^{m-1}\times\mathbb{R},$ where $\bar M$ is a
semi-parallel hypersurface of $S^m$, that is, $M^m$ is a vertical
cylinder.

By Theorem \ref{umbbih}, we obtain the conclusion for the case (II).
Therefore, we only need to consider the case (III).

The case (III):
Since $M$ is a rotation hypersurface, by (\ref{change}) and $\lambda\mu=-\cos^2\alpha$ we have
$uu'{\rm cot}s=u^2-1.$
Solving this equation yields
$u=\pm\sqrt{1+C\,\sec^2(s)}.$
However, this does not satisfy Equation \eqref{54}. Thus, the proof for the case of $S^m\times \r$ is complete.

By using the classification of semi-parallel hypersurfaces in $H^m\times \r$ given in \cite{CKV} and an argument similar to the above, we obtain the proof  for the case of $H^m\times \r$. 
\end{proof}
\begin{remark}
We remark that \eqref{BHEq2T} is equivalent to the Codazzi equation. Therefore, to show the Case (I), we do not need the tangential part of \eqref{BHEq}.
\end{remark}
\section{Biharmonic Rotation surfaces in $S^2\times\r$ and $H^2\times\r$}
In this section, we focus our attention on rotation surfaces in
$S^2\times\r$ and $H^2\times\r$. It should be remarked that we
choose in this section the parametrizations of rotation surfaces
developed in \cite{AEG}, which is different from the ones in Section
4. With this parametrizations, one could easily obtain some
classification results on biharmonic rotation surfaces in
$S^2\times\r$ and $H^2\times\r$.

We first derive the equivalent equations for a rotation surfaces in
$S^2\times\r$ to be biharmonic.
\begin{theorem}\label{To1}
A rotation surface  $f: \r^2\supseteq D^2\to S^2\times
\r$ with
 \begin{equation}\label{528}
 f(r, \theta)=(\sin k(r), \cos k(r) \cos \theta, \cos k(r) \sin \theta, h(r))
 \end{equation}
 is biharmonic if and only if it is minimal, an open part of the vertical cylinder $S^1(\frac{1}{\sqrt{2}})\times \r$ or
\begin{equation}\label{BRE}
\begin{cases}
\Delta^2\, k\, +2 \Delta\, k + 2(\sec^2 k\, \tan k)k'^2+(1-\tan^2 k)\tan k=0,\\
(k'H)'=C\sec k,\; {\rm for\; some\; constant} \;\;C\geq0,\\
h'^2+k'^2=1,
\end{cases}
\end{equation}
where  $\Delta$ is the Laplacian on the surfaces defined by the
induced metric.
\end{theorem}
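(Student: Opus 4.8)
The plan is to reduce the vector biharmonic system \eqref{BHP1} (with $c=1$, $m=2$) to ordinary differential equations in the single profile function $k$, in the same spirit as the proof of Theorem \ref{RE1} but using the parametrization \eqref{528} taken from \cite{AEG}, and then to recognise the resulting equations as \eqref{BRE}. First I would fix the gauge: reparametrising the profile curve by arc length turns the induced metric $(k'^2+h'^2)\,dr^2+\cos^2k\,d\theta^2$ into $dr^2+\cos^2k\,d\theta^2$, which is exactly the third equation $h'^2+k'^2=1$ of \eqref{BRE}. With this normalisation a direct computation gives the unit normal $\xi=(-h'\cos k,\,h'\sin k\cos\theta,\,h'\sin k\sin\theta,\,k')$, hence the angle function $\theta=\cos\alpha=\langle\xi,\partial_t\rangle=k'$ and the tangential part $T=h'e_1$ with $e_1=f_r$; the shape operator is diagonal, with principal curvatures $\lambda_1=-k''/h'$ and $\lambda_2=-h'\tan k$ (for a suitable orientation), so that $2H=\lambda_1+\lambda_2$. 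I would also record that for a radial function the induced Laplacian is $\Delta\psi=\psi''-k'\tan k\,\psi'$, together with the two identities $\Delta h=2k'H$ and $\Delta k+\tan k=-2h'H$, both of which follow at once from $h'h''=-k'k''$ and the expressions for $\lambda_1,\lambda_2$.

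The second line of \eqref{BRE} I would obtain from Corollary \ref{O10}: biharmonicity forces the height $h$ to be a biharmonic function, i.e. $\Delta^2h=0$. Using $\Delta h=2k'H$ this becomes $\Delta(k'H)=0$, and since $k'H$ is radial the equation $(k'H)''-k'\tan k\,(k'H)'=0$ integrates once, giving $(k'H)'=C\sec k$ for a constant $C$; reflecting the $\mathbb R$-factor if necessary we may take $C\ge0$. The first line I would extract from the normal part of \eqref{BHP1}, namely $\Delta H=H(|A|^2-\sin^2\alpha)=H(\lambda_1^2+\lambda_2^2-h'^2)$ (equivalently, from biharmonicity of the spherical component $\pi_1\circ\varphi\colon M\to S^2$). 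The key device is that, although $H=-(\Delta k+\tan k)/(2h')$ carries the awkward factor $1/h'$, the quantities $\lambda_1^2=(k'')^2/(1-k'^2)$, $\lambda_2^2=(1-k'^2)\tan^2k$ and $h'^2=1-k'^2$ are already rational in $k,k'$ alone. Multiplying the normal equation by $2h'$ and repeatedly eliminating $h'$ through $h'^2=1-k'^2$ and $h'h''=-k'k''$ renders every term a function of $k$ and its $r$-derivatives; reorganising the top-order part as $\Delta^2k$ and the lower order terms via $\Delta(\tan k)=\sec^2k(k''+k'^2\tan k)$ should then yield precisely $\Delta^2k+2\Delta k+2(\sec^2k\,\tan k)k'^2+(1-\tan^2k)\tan k=0$. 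As a consistency check, minimal profiles are characterised by $\Delta k=-\tan k$, and substituting this collapses the first equation to an identity through $\sec^2k-1=\tan^2k$.

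Finally I would read off the trichotomy from the tangential part of \eqref{BHP1}, which after $\nabla H=H'e_1$, $A(\nabla H)=\lambda_1H'e_1$ and $T=h'e_1$ reduces to the scalar equation $(\lambda_1+H)H'+k'h'H=0$. If $H\equiv0$ the surface is minimal; if $H$ is a nonzero constant then, by Theorem \ref{ConstH} together with the classification of constant mean curvature biharmonic surfaces in \cite{OW}, the surface is an open part of the vertical cylinder $S^1(1/\sqrt2)\times\mathbb R$ (here $k\equiv\pi/4$, so that $\cos k=1/\sqrt2$, and all three equations hold with $C=0$); in the remaining case $H$ is non-constant and the two identities above together with the equations just derived give \eqref{BRE}. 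The converse amounts to checking that each of the three alternatives satisfies \eqref{BHP1}.

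I expect the main obstacle to be the second paragraph: the passage from the normal equation to the clean fourth-order form \eqref{BRE} is a lengthy elimination in which the spurious powers of $h'$ and the curvature contribution $c=1$ must cancel exactly, and in particular one must verify that the cubic-in-$k''$ terms coming from $H\lambda_1^2$ and from the $1/h'$ inside $\Delta H$ annihilate each other so that the final equation is linear in its highest-order part. Keeping this bookkeeping under control, rather than any conceptual difficulty, is where the real work lies; the splitting into height-biharmonic (yielding $(k'H)'=C\sec k$) and $S^2$-biharmonic parts via Corollary \ref{O10} is what makes the structure transparent.
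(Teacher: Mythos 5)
Your setup (arc-length gauge, the normal $\xi$, the identities $\cos\alpha=k'$, $T=h'e_1$, $\lambda_1=-k''/h'=h''/k'$, $\lambda_2=-h'\tan k$, $\Delta u=u''-k'\tan k\,u'$, $\Delta h=2k'H$ and $\Delta k+\tan k=-2h'H$) matches the paper, and your derivation of the second and third lines of \eqref{BRE} --- height biharmonicity from Corollary \ref{O10}, then one integration of $\Delta(\Delta h)=0$, with the degenerate cases $\Delta h={\rm const}$ producing the minimal and cylinder alternatives --- is essentially the paper's argument. The genuine gap is in your second paragraph. The normal component of the bitension field is \emph{not} ``equivalently'' the biharmonicity of the spherical component: since $\xi=-h'\partial_\rho+k'\partial_t$ while $e_1=k'\partial_\rho+h'\partial_t$, the normal part of \eqref{BHP1} is the combination $-h'\cdot(\text{$S^2$-part})+k'\cdot(\text{$\mathbb{R}$-part})$ of the two factor bitension fields, whereas the first line of \eqref{BRE} is the vanishing of the $S^2$-part alone, i.e. $\Delta x+(1-\tan^2 k)x=0$ with $x=\Delta k+\tan k=-2h'H$. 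These are genuinely different fourth-order ODEs (note $\Delta(h'H)\neq h'\Delta H$), so the elimination you describe will not ``yield precisely'' the first line of \eqref{BRE}: the leftover terms are a multiple of $\Delta^2 h$ and disappear only after the height equation is also imposed, and even then only where $h'\neq0$. The exact cancellation you flag as the ``main obstacle'' is in fact a genuine obstruction to the route as you describe it.

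The repair is either to feed the already-derived height equation back into the normal equation (and treat the locus $h'=0$ separately), or --- as the paper does --- to bypass the normal/tangential splitting altogether: by Corollary \ref{O10} the immersion is biharmonic if and only if the height function and the map $(M^2,\,dr^2+\cos^2k\,d\theta^2)\to(S^2,\,d\rho^2+\cos^2\rho\,d\phi^2)$, $(r,\theta)\mapsto(k(r),\theta)$, are both biharmonic, and the latter is governed by the rotationally symmetric biharmonic map system of \cite{WOY}, namely $x''-k'\tan k\,x'+(1-\tan^2k)x=0$ with $x=k''+(1-k'^2)\tan k$, which expands directly to the first line of \eqref{BRE}. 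Your consistency check (the equation collapsing when $\Delta k=-\tan k$) is correct and survives either route, but as written your proposed derivation of the first equation does not go through.
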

\begin{proof}
Without lost of generality, we may assume that  the profile curve\\ $(\sin \rho(r),  \cos\rho(r) ,0, h(r))$ ($\cos \rho(r)\ge 0$) of the rotation surface defined by (\ref{528})  is parametrized by arclength parameter so that we have
 \begin{equation}\label{AL}
k'(r)^2+h'(r)^2=1.
 \end{equation}

 We choose geodesic polar coordinates $(\rho, \phi)$ on $S^2$ so that its metric takes the form $g_{S^2}=\rm d \rho^2+\cos^2 \rho \rm d \phi^2$, and the product metric on $N^3=S^2\times \r$ reads
 \begin{equation}\notag
 g_N=\rm d \rho^2+\cos^2 \rho\, \rm d \phi^2+\rm dt^2.
 \end{equation}

With the chosen local coordinates, the rotation surface can be
viewed as an isometric immersion
 \begin{eqnarray}\label{DW}
 f: M^2 &=&\{(r, \theta)\subset \r^2\}\to (S^2\times \r, \rm d \rho^2+\cos^2 \rho\, {\rm d} \phi^2+\rm dt^2),\\\notag
 f(r, \theta)&=&(k(r), \theta, h(r)),\; {\rm or}\; \rho(r, \theta)=k(r),\; \phi (r,\theta)=\theta,\; t (r, \theta)=h(r).
 \end{eqnarray}
  A straightforward computation yields
 \begin{eqnarray}\notag
{\rm d}f(\partial_r)=f_r=(k', 0, h'),\;\;{\rm
d}f(\partial_\theta)=f_{\theta}=(0, 1, 0),
 \end{eqnarray}
and the induced metric on the rotation surface given by
  \begin{equation}\notag
g_{M^2}={\rm d} r^2+\cos^2 k(r)\, {\rm d}\, \theta^2.
 \end{equation}

The computations of the principal and the mean curvatures of the
rotation surface can be done as follows.

 By identifying the point $(r,\theta)\in M^2$ with its image $f(r, \theta)\in S^2\times \r$ and the vector $X$ tangent to $M^2$ to the vector ${\rm d} f (X)$ tangent to $S^2\times \r$, we choose an orthonormal frame
 \begin{eqnarray} \notag
 e_1&=&k'\partial_{\rho}+h'\partial_t= \cos \alpha\, \partial _{\rho}+\sin \alpha\,\partial_t, \;\;
 e_2=\sec\rho\, \partial _{\phi}, \\\notag
 \xi &=&-h'\partial_{\rho}+k'\partial_t=-\sin \alpha\, \partial _{\rho}+\cos \alpha\,\partial_t
 \end{eqnarray}
 on the ambient space adapted to the rotation surface with $\xi$ being the unit normal vector field of the surface.

 Note that in the above, we have the angle function of the surface satisfying
 \begin{equation}\label{Ag1}
 \cos \alpha\,=\langle \xi, \partial_t\rangle=k',\;\;\; \sin
 \alpha\,=\sqrt{1-k'^2}=h'.
 \end{equation}

 A straightforward computation gives
 \begin{eqnarray}\notag
\langle [e_1, \xi], e_1\rangle = -h''/k',\;\;\; \langle [e_2,
\xi],e_2\rangle=h'\,\tan k.
 \end{eqnarray}
 A further computation using these and Koszul's formula, we have
  \begin{eqnarray}\notag
 \langle A_{\xi}\, e_1, e_1\rangle&=&-\langle \nabla^N_{e_1}\,\xi, e_1\rangle=-\langle [e_1, \xi], e_1\rangle=h''/k',\\\notag
  \langle A_{\xi}\, e_2, e_2\rangle&=&-\langle \nabla^N_{e_2}\,\xi, e_2\rangle=-\langle [e_2, \xi],e_2\rangle= -h'\,\tan k\,.
 \end{eqnarray}
 It follows that $e_1, e_2 $ are the two principal directions with the principal curvatures
 \begin{equation}\notag
 \lambda_1=h''/k',\;\;\;\lambda_2= -h'\,\tan k.
 \end{equation}
It follows that the mean curvature of the  rotation surface is given
by
\begin{equation}\label{KE}
H=\frac{1}{2} (\lambda_1+\lambda_2)=\frac{1}{2k'} (h''-h'k'\,\tan
k).
\end{equation}
 By Corollary \ref{O10}, the isometric immersion (\ref{DW}) is biharmonic if and only if both the height function
\begin{equation}\notag
h: (M^2, {\rm d} r^2+\cos^2 k(r)\, {\rm d}
\theta^2)\to\r,\;\; h(r, \theta)=h(r)
\end{equation}
and the map
\begin{eqnarray}\label{M2}
&& \left(M^2, {\rm d} r^2+\cos^2 k(r)\, {\rm d} \theta^2)
\right)\to (S^2, g_{S^2}={\rm d} \rho^2+\cos^2 \rho\,
{\rm d} \phi^2),\\\notag && \varphi(r, \theta)=(k(r), \theta)
\end{eqnarray}
are  biharmonic.

It was proved in \cite{WOY} (Corollary 2.3) that a rotationally
symmetric  map
$\varphi:(M^2,dr^2+\sigma^2(r)d\theta^2)\to
(N^2,d\rho^2+\lambda^2(\rho)d\phi^2)$, $\varphi(r,\theta)=(\rho(r),
\theta)$ is biharmonic if and only if it solves the system
\begin{equation}\notag
\begin{cases}
x''+\frac{\sigma'}{\sigma}x'-\frac{(\lambda\lambda')'(\rho)}{\sigma^2}x=0,\\
x=\tau^1=\rho''+\frac{\sigma'}{\sigma}\rho'-\frac{\lambda\lambda'(\rho)}{\sigma^2}.
\end{cases}
\end{equation}
Applying this, we conclude that the rotationally symmetric map
defined by (\ref{M2}) is biharmonic if and only if
\begin{equation}\label{Sl4}
\begin{cases}
x''-k' (\tan k)\, x'+(1-\tan^2 k)x=0,\\
x=\tau^1=k''+(1-k'^2)\tan k.
\end{cases}
\end{equation}
A straightforward computation gives the following lemma.
\begin{lemma}\label{LP}
For a function $u:(M^2,dr^2+\cos^2 k(r) d\theta^2)\to\r$
with $u(r, \theta)=u(r)$, we have
\begin{equation}
\Delta u= u''-k' (\tan k)  u'=u''+(\ln \cos k)' u'.
\end{equation}
In particular,
\begin{eqnarray}\label{I3}
\Delta h=h''-(k'\tan k) h'= 2k' H.
\end{eqnarray}
\end{lemma}
Using this, together with (\ref{Sl4}), and a further computation, we
conclude that the rotationally symmetric map (\ref{M2}) is
biharmonic if and only if
\begin{equation}\notag
\Delta^2\, k\, +2 \Delta\, k + 2(\sec^2 k\, \tan k)k'^2+(1-\tan^2
k)\tan k=0.
\end{equation}
Moreover, the biharmonicity of the height function $\Delta^2 h=0$
implies that
\begin{eqnarray}\label{HH1}
\Delta^2 h=(\Delta h)''-(k'\tan k) (\Delta h)'=0.
\end{eqnarray}
To solve equation (\ref{HH1}), we note that if $(\Delta h)'\neq0$ in
an open set.  Then we can solve Equation (\ref{HH1}) to have
\begin{align}\label{GDO19}
(\Delta h)'=C\sec k
\end{align}
 for some constant $C\ne0$.

 Now  if $(\Delta h)'\equiv 0$, then $\Delta h=C_1$, a constant. If $C_1\ne 0$, then the corresponding solutions can be included in (\ref{GDO19}) by allowing $C=0$. Otherwise, we have $\Delta h=0$, from which and (\ref{I3}),  we have either $H=0$ and the rotation
surface is minimal, or $k'=0$. The latter case $k'=0$ means that
$k={\rm constant}$ in an open set. Combining this and (\ref{GD10})
we have
\begin{eqnarray}\notag
\cos k =\pm \frac{1}{\sqrt{2}}, \:{\rm and\: hence},\;\;\sin k =\pm
\frac{1}{\sqrt{2}}.
\end{eqnarray}
Substituting $k={\rm constant}$ into (\ref{AL}) we have $h'(r)=1$
and hence $h(r)=r+r_0$. Therefore, the biharmonic rotation surface
$f: \r^2\supseteq D^2\to S^2\times \r$ is given by
 \begin{equation}\notag
 f(r, \theta)=(\pm \frac{1}{\sqrt{2}}, \pm \frac{1}{\sqrt{2}} \cos \theta, \pm \frac{1}{\sqrt{2}} \sin \theta, r+r_0),
 \end{equation}
which are exactly the vertical cylinder.

Putting all the results together we complete the proof of the
theorem.
\end{proof}
\begin{theorem}\label{To2}
A rotation surface  $f: \r^2\supseteq D^2\to S^2\times
\r$ with
 \begin{equation}\notag
 f(r, \theta)=(\sin k(r), \cos k(r) \cos \theta, \cos k(r) \sin \theta, h(r))\nonumber
 \end{equation}
and $\Delta k=0$ is  biharmonic if and only if it is minimal or an
open subset of the vertical cylinder $S^1(\frac{1}{\sqrt{2}})\times
\r\subset S^2\times \r$.
\end{theorem}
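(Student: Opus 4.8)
The plan is to push everything through the first equation of the system \eqref{BRE} produced by Theorem \ref{To1}. The key observation is that this equation — which by Corollary \ref{O10} is exactly the condition that the projection $\varphi$ onto the $S^2$-factor be biharmonic — holds for \emph{every} biharmonic rotation surface of the given form, independently of the degenerate ``minimal or cylinder'' alternatives (those come only from the height-function equation $\Delta^2h=0$). Under the standing hypothesis $\Delta k=0$ the two leading terms satisfy $\Delta^2k+2\Delta k=0$, so the first equation of \eqref{BRE} collapses to the pointwise identity $2(\sec^2 k\,\tan k)k'^2+(1-\tan^2k)\tan k=0$, which factors as $\tan k\,\big[2\sec^2k\,k'^2+1-\tan^2k\big]=0$. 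In parallel I would translate the hypothesis, via Lemma \ref{LP}, into the ODE $k''=k'^2\tan k$. These two relations are the only inputs needed; the second and third equations of \eqref{BRE} play no role in the forward implication.

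Next I would run the dichotomy from the factorization on a connected piece of the domain. If $\tan k\equiv0$, then $k$ is the constant $0$ (mod $\pi$), so $k'\equiv0$ and the surface is the vertical cylinder over a great circle of $S^2$; a great circle is a geodesic, its geodesic curvature vanishes, hence the mean curvature of the cylinder is zero and the surface is minimal. Otherwise there is a point, and so an open neighbourhood, on which $\tan k\neq0$, where the bracket must vanish: $2\sec^2k\,k'^2=\tan^2k-1$, i.e. $k'^2=-\tfrac12\cos 2k$. Differentiating this gives, wherever $k'\neq0$, the identity $k''=\sin k\cos k$; comparing with $k''=k'^2\tan k=-\tfrac12\cos(2k)\tan k$ and cancelling $\sin k$ (which is nonzero there) forces $\cos^2k=\tfrac14$, i.e. $k$ is locally constant, contradicting $k'\neq0$. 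Hence on this neighbourhood $k'\equiv0$, and then $1-\tan^2k=0$ gives $k\equiv\pi/4$, so $\sin k=\cos k=1/\sqrt2$ and the surface is an open part of $S^1(1/\sqrt2)\times\r$.

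For the converse I would note that a minimal immersion is harmonic, hence biharmonic, and that $S^1(1/\sqrt2)\times\r$ is already listed as biharmonic in Theorem \ref{To1}; both satisfy $\Delta k=0$ (the first with $k\equiv0$, the second with $k\equiv\pi/4$), so they genuinely lie under the hypotheses. The step I expect to be most delicate is the open-set bookkeeping: converting the pointwise factorization $\tan k[\cdots]=0$ into a clean \emph{global} dichotomy. One must work on the open set where $\tan k\neq0$, use the constraint $k'^2=-\tfrac12\cos 2k$ only where differentiation is legitimate, and then invoke connectedness — or real-analyticity of $k$, which follows since $k$ solves an analytic ODE — to rule out patching distinct regimes and to conclude that $k$ is globally the constant $0$ or $\pi/4$. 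Once this is settled, identifying these two constants with the minimal great-circle cylinder and with $S^1(1/\sqrt2)\times\r$ is immediate.
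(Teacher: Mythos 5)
Your proposal is correct and follows essentially the same route as the paper: reduce the first equation of \eqref{BRE} under $\Delta k=0$ to $\tan k\,[2\sec^2k\,k'^2+1-\tan^2k]=0$, treat $\tan k\equiv 0$ as the minimal case, and show $k'\equiv 0$ (hence $\cos^2k=\tfrac12$) on any set where $\tan k\neq 0$. The only cosmetic difference is how the contradiction is reached when $k'\neq0$: you differentiate the constraint $k'^2=-\tfrac12\cos 2k$ and compare with $k''=k'^2\tan k$, whereas the paper integrates $\Delta k=0$ to $k'\cos k=C$ and substitutes to get a polynomial identity in $\cos^2k$; both are valid and equally short.
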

\begin{proof}
If $\Delta k=0$, then the the first equation of  (\ref{BRE}) reduces
to
\begin{equation}\notag
 2(\sec^2k\, \tan k)k'^2+(1-\tan^2 k)\tan k=0.
\end{equation}
It follows that either we have $\tan k\equiv 0$, in which case, the
$x=\Delta k +\tan k=0$ and hence tension field vanishes identically
and the surface is minimal, or, in an open set in which $\tan k\ne
0$, we have
\begin{equation}\label{GD10}
 k'^2=\frac{1}{2}(1-2\cos^2k).
\end{equation}
On the other hand,  $\Delta k=0$ is equivalent to
\begin{equation}\label{LP2}
k''+(\ln \cos k)' k'=0.
\end{equation}
It follows that either (i) $k'\equiv 0$, or (ii) there is an open
set in which $k'\ne 0$. Now we show that (ii) is impossible.  In
fact, if $k'\ne 0$, we use $\Delta k=0$ to have $(\ln (k' \cos k))'
=0$, which is equivalent to $k'=\frac{C}{ \cos k}$, for some
non-zero constant. Substituting this into (\ref{GD10}) we have
\begin{equation}\label{GD11}
\frac{C^2}{ \cos^2 k}=\frac{1}{2}(1-2\cos^2k),
\end{equation}
which is equivalent to
\begin{equation}\label{GD11}
-2\cos^4k+\cos^2k-2C^2=0.
\end{equation}
It is easy to see that the quadratic equation (\ref{GD11}) in
$\cos^2k$ either has no solution or has constant solution $\cos^2
k=C_1$. In this case, $k$ is constant and hence $k'=0$ in the open
set. This contradicts the assumption that $k'\ne 0$ in the open set.
It following that the only solutions of  (\ref{GD10}) and
(\ref{LP2}) is $k'\equiv 0$. Therefore, the biharmonic rotation
surface is given by the vertical cylinder. Thus, we obtain the
theorem.
\end{proof}
Flat rotation hypersurfaces in $S^m\times \r$ and $H^m\times \r$ were characterized by the expressions of their profiles. In the following we give a classification of flat rotation surfaces in $S^2\times \r$ and $H^2\times \r$.
\begin{theorem}\label{To2}
A rotation surface  $f: \r^2\supseteq D^2\to S^2\times
\r$ with
 \begin{equation}
 f(r, \theta)=(\sin k(r), \cos k(r) \cos \theta, \cos k(r) \sin \theta, h(r))\nonumber
 \end{equation}
and $K=0$ is  biharmonic if and only if it is minimal or an open
subset of the vertical cylinder $S^1(\frac{1}{\sqrt{2}})\times
\r\subset S^2\times \r$.
\end{theorem}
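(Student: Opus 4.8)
The plan is to run the flatness hypothesis through the biharmonic system \eqref{BRE} of Theorem \ref{To1} and collapse everything to a single polynomial identity in $\cos k$, whose degree forbids all but the two advertised solutions. First I would record the intrinsic Gauss curvature. By Lemma \ref{LP} the induced metric is $dr^2+\cos^2 k(r)\,d\theta^2$, and for any metric of the warped form $dr^2+f(r)^2\,d\theta^2$ one has $K=-f''/f$; hence
\[
K=-\frac{(\cos k)''}{\cos k}.
\]
Thus $K=0$ is equivalent to $(\cos k)''=0$, i.e. $\cos k(r)=ar+b$ is affine in $r$. Writing $c=\cos k$, $s=\sin k$, differentiating $c=ar+b$ gives $k'=-a/s$ and $k''=-k'^2\cot k$, so every derivative of $k$ becomes an explicit rational function of $s,c$ and the constant $a$.

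Next I would feed this into biharmonicity. By Theorem \ref{To1} such a biharmonic surface is minimal, the vertical cylinder, or satisfies \eqref{BRE}, so it suffices to analyze the first equation of \eqref{BRE} (the biharmonicity of the $S^2$-component map, which involves $k$ alone):
\[
\Delta^2 k+2\Delta k+2(\sec^2 k\,\tan k)k'^2+(1-\tan^2 k)\tan k=0.
\]
Using Lemma \ref{LP} (so $\Delta k=k''-k'^2\tan k$) together with $K=0$, I would obtain $\Delta k=-k'^2(\cot k+\tan k)=-k'^2/(sc)$, and then compute $\Delta^2 k=\Delta(\Delta k)$ as an explicit expression in $s,c,a$ by the same bookkeeping as in \eqref{521}--\eqref{523}. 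Substituting these into the displayed equation and multiplying through by $s^7c^3$, and finally using $s^2=1-c^2$, turns the equation into a polynomial identity in the single variable $c$.

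The decisive step is a degree count. After clearing denominators, every term carrying a factor of $a$ has degree at most $6$ in $c$ (the $\Delta^2 k$ and $\Delta k$ contributions and the $k'^2$ term all come with positive powers of $a$), whereas the single $a$-free term $(1-\tan^2 k)\tan k$ contributes $(1-c^2)^4(2c^2-1)$, of degree $10$ with leading coefficient $2$. If $a\neq 0$ then $c=ar+b$ sweeps a nondegenerate interval, so the identity must hold for all such $c$; but the degree-$10$ coefficient $2$ can never be cancelled, a contradiction. Hence $a=0$ and $c=\cos k$ is constant. In that case the equation reduces to $(1-\tan^2 k)\tan k=0$, forcing either $\tan k=0$, in which case $H=0$ and the surface is minimal, or $\cos^2 k=\tfrac12$, which is precisely the vertical cylinder $S^1(1/\sqrt2)\times\r$. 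The converse is immediate: both cases are flat, minimal surfaces are automatically biharmonic, and $S^1(1/\sqrt2)\times\r$ is biharmonic by Theorem \ref{MT} (see also \cite{OW}).

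I expect the only genuine labor to be the explicit computation of $\Delta^2 k$ under the constraint $K=0$; it is routine but arithmetically heavy. Once it is in hand the degree argument closes the proof in a line, and, as in the remark following Theorem \ref{semibih}, no use of the second (height-function) equation of \eqref{BRE} is required. The same computation, with $c=-1$ and the corresponding sign changes in \eqref{eq:tildeR}, should dispatch the $H^2\times\r$ analogue identically.
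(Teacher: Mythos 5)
Your proposal is correct and follows essentially the same route as the paper: both reduce $K=0$ to $\cos k=ar+b$ (you via the intrinsic warped-product formula, the paper via the extrinsic identity $K=\lambda_1\lambda_2+\cos^2\alpha$, which gives the same equation $k''\tan k+k'^2=0$), then substitute into the first equation of \eqref{BRE} and kill the case $a\neq 0$ through the uncancellable degree-$10$ coefficient $2$ of the resulting polynomial in $\cos k=ar+b$. Your degree-count shortcut is only a mild streamlining of the paper's explicit polynomial \eqref{FK5}, whose $a$-free part is indeed $(2\cos^2k-1)(1-\cos^2k)^4$ exactly as you predict.
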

\begin{proof}
It follows from Gauss equation that the Gauss curvature $K$ is given
by
 \begin{equation}
 K=\lambda_1\lambda_2+\cos^2\alpha.\nonumber
 \end{equation}
It follows that if $K=0$, then  \eqref{KE} and \eqref{Ag1}
apply to give 
\begin{equation}\label{525}
\lambda_1\lambda_2+\cos^2\alpha=k''\tan k+k'^2=0,
\end{equation}
where in obtaining the equation we have also used $h'h''=-k'k''$ which follows from
(\ref{AL}). Equation (\ref{525}) can be rewritten as
  \begin{eqnarray}
k''\sin k+k'^2\cos k=(k'\sin k)'=-(\cos k)''=0,\nonumber
 \end{eqnarray}
which can be solved to give
 \begin{eqnarray}\label{Ar}
   \cos k=Ar+B 
 \end{eqnarray}
 for some constant $A$ and $B$.
If $A=0$, then $k$ is a constant and hence, by \eqref{Ag1}, 
the angle function $\alpha$ is constant. Thus, by the surface is an
open subset of the vertical cylinder $S^1(\frac{1}{\sqrt{2}})\times
\r\subset S^2\times \r$. If $A\neq0$, we use (\ref{Ar}) to have
 \begin{align}\notag
k'=&-\frac{A}{\sin k}\label{FK2},\;\;\;\Delta k=k''-k'^2\tan k=-\frac{A^2}{\sin^3 k\cos k}\\\notag
\Delta^2 k=&-\frac{A^4}{\sin^7 k\cos^3 k}(16\cos^4 k-2\cos^2 k+1)
 \end{align}
Substituting the above equations  into the first equation of \eqref{BRE}, we have
 \begin{eqnarray}
2\cos^{10}k-9\cos^8k-4(A^2-4)\cos^6 k-2(8A^4-5A^2+7)\cos^4
k\nonumber\\
+2(A^4-4A^2+3)\cos^2k-A^4+2A^2-1=0.\label{FK5}
 \end{eqnarray}
Taking into account $\cos k=Ar+B$, we see that Equation \eqref{FK5} is a
non-trivial polynomial equation $p(r)\equiv 0$ of  degree $10$.
So, all coefficients of the polynomial $p(r)$ should be
zero. In particular, the leading term gives $2A^{10}=0$, which
contradicts our assumption that $A\neq0$. This completes the proof of
the theorem.
\end{proof}

Similar arguments apply to give the following results on biharmonic  rotation
surfaces in $ H^2\times \r$.
\begin{theorem}\label{H1}
A rotation surface  $ \psi: \r^2\supseteq D^2\to
H^2\times \r$ with
 \begin{equation}\notag
 \psi(r, \theta)=( \cosh k(r), \sinh k(r)\cos \theta, \sinh k(r)\sin \theta, h(r))
 \end{equation}
 is biharmonic if and only if it is minimal or
\begin{equation}\notag
\begin{cases}
\Delta^2\, k\, -2 \Delta\, k - 2\coth k\,(\coth^2 k-1)\,k'^2+(1+\coth^2 k)\coth k=0,\\
(k'H)'=\frac{C}{\sinh k},\; {\rm for\; some\; constant} \;\;C,\\
h'^2+k'^2=1,
\end{cases}
\end{equation}
where  $\Delta$ is the Laplacian on the surfaces defined by the
induced metric.
\end{theorem}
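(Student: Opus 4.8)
Theorem \ref{H1} is the hyperbolic analogue of Theorem \ref{To1}, so the plan is to mirror that proof, replacing the spherical geodesic polar metric by the hyperbolic one and carrying the sign changes that arise from $c=-1$ through every computation.

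First I would set up geodesic polar coordinates $(\rho,\phi)$ on $H^2$ so that the metric reads $g_{H^2}={\rm d}\rho^2+\sinh^2\rho\,{\rm d}\phi^2$, giving the product metric $g_N={\rm d}\rho^2+\sinh^2\rho\,{\rm d}\phi^2+{\rm d}t^2$ on $N^3=H^2\times\r$. Under the chosen parametrization, the rotation surface becomes the isometric immersion $\psi(r,\theta)=(k(r),\theta,h(r))$ with induced metric ${\rm d}r^2+\sinh^2 k(r)\,{\rm d}\theta^2$. Assuming arclength parametrization of the profile curve, one has $k'^2+h'^2=1$ and the angle-function relations $\cos\alpha=k'$, $\sin\alpha=h'$. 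I would then introduce the adapted orthonormal frame $e_1=k'\partial_\rho+h'\partial_t$, $e_2=\frac{1}{\sinh\rho}\partial_\phi$, $\xi=-h'\partial_\rho+k'\partial_t$, and compute the principal curvatures via Koszul's formula. The only essential change from the spherical case is that $\tan k$ is replaced by $\coth k$ (since $\partial_\rho\log\sinh\rho=\coth\rho$), yielding $\lambda_1=h''/k'$ and $\lambda_2=-h'\coth k$, and hence $H=\frac{1}{2k'}(h''-h'k'\coth k)$.

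Next I would invoke Corollary \ref{O10}: the immersion is biharmonic if and only if both the height function $h$ and the rotationally symmetric map $\varphi(r,\theta)=(k(r),\theta)$ into $(H^2,g_{H^2})$ are biharmonic. For the map into $H^2$ I would apply the same criterion from \cite{WOY} (Corollary 2.3) used in Theorem \ref{To1}, now with $\sigma(r)=\sinh k(r)$ and target warping function $\lambda(\rho)=\sinh\rho$, so that $\lambda\lambda'(\rho)=\sinh\rho\cosh\rho$ and $(\lambda\lambda')'(\rho)=\cosh 2\rho=1+2\sinh^2\rho$. This produces the hyperbolic version of the linearized system \eqref{Sl4}, whose first equation, after substituting the tension-field expression $x=\tau^1=k''+(1+k'^2)\coth k$ (note the sign flip from $(1-k'^2)\tan k$) and using the analogue of Lemma \ref{LP} — namely $\Delta u=u''+(\ln\sinh k)'u'=u''+(k'\coth k)u'$ — gives the first displayed equation $\Delta^2 k-2\Delta k-2\coth k(\coth^2 k-1)k'^2+(1+\coth^2 k)\coth k=0$. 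For the height function, biharmonicity $\Delta^2 h=0$ together with $\Delta h=2k'H$ yields $(\Delta h)''+(k'\coth k)(\Delta h)'=0$, which integrates to $(k'H)'=\frac{C}{\sinh k}$ for some constant $C$, exactly as the factor $\sec k$ became $\csc$-type in the hyperbolic setting.

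The main obstacle, as in the spherical case, will be bookkeeping: every occurrence of $c=1$, $\tan k$, $\cos k$, and $\sec k$ must be consistently replaced by $c=-1$, $\coth k$, $\sinh k$, and $\frac{1}{\sinh k}$, and the curvature term in the tension field changes sign, so I must verify carefully that the signs in the final two equations come out as stated rather than merely assuming formal substitution. I would close by noting that when $C=0$ and $\Delta h=0$ one recovers either the minimal case $H=0$ or $k'=0$; however, since $H^2$ contains no compact geodesic circle giving a biharmonic cylinder (the hyperbolic analogue of $S^1(1/\sqrt2)$ does not exist because $\coth k$ never produces the required constant-$k$ balance), the degenerate branch collapses to the minimal case, which is why the statement lists only \emph{minimal or} the displayed system without a vertical-cylinder alternative. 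This completes the plan.
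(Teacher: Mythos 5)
Your overall plan is the same as the paper's: the paper offers no separate proof of Theorem \ref{H1}, saying only that ``similar arguments apply'' to the proof of Theorem \ref{To1}, and your outline carries out that analogy with the correct structure (geodesic polar metric ${\rm d}\rho^2+\sinh^2\rho\,{\rm d}\phi^2$, the splitting of biharmonicity via Corollary \ref{O10}, the criterion of \cite{WOY} with $\sigma=\sinh k$ and $\lambda(\rho)=\sinh\rho$, and the hyperbolic analogue $\Delta u=u''+(k'\coth k)u'$ of Lemma \ref{LP}).

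There is, however, one concrete error that would derail the computation if carried out as written: your tension field of the map into $H^2$ has the wrong sign on the curvature term. Since $\frac{\lambda\lambda'(k)}{\sigma^2}=\frac{\sinh k\cosh k}{\sinh^2 k}=\coth k$, the correct expression is
\[
x=\tau^1=k''+k'^2\coth k-\coth k=\Delta k-\coth k ,
\]
whereas you wrote $x=k''+(1+k'^2)\coth k=\Delta k+\coth k$. Passing from the spherical case $\tau^1=k''-k'^2\tan k+\tan k=k''+(1-k'^2)\tan k$ to the hyperbolic one, \emph{both} terms flip sign under $\tan k\mapsto\coth k$, so the correct analogue is $k''-(1-k'^2)\coth k$, not $k''+(1+k'^2)\coth k$. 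This matters: substituting $x=\Delta k-\coth k$ into the linearized equation $x''+(k'\coth k)x'-(1+\coth^2k)x=0$ produces middle terms $-2k''-2k'^2\coth^3k=-2\Delta k-2\coth k(\coth^2k-1)k'^2$, i.e.\ exactly the first equation of the theorem, while your $x=\Delta k+\coth k$ instead yields $-2\coth^2k\,k''-2k'^2\coth k$ and a different fourth-order equation with the wrong sign on the zeroth-order term. The remaining steps are fine: $(\lambda\lambda')'(\rho)=1+2\sinh^2\rho$ gives the coefficient $1+\coth^2k$; $\Delta h=2k'H$ and $\Delta^2h=0$ integrate to $(k'H)'=C/\sinh k$; and the cylinder branch $k'=0$ forces $(1+\coth^2k)\coth k=0$, which is impossible, so only the minimal alternative survives, as the statement asserts.
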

\begin{theorem}\label{H2}
A rotation surface  $f: \r^2\supseteq D^2\to H^2\times
\r$,
 \begin{equation}
 \psi(r, \theta)=( \cosh k(r), \sinh k(r)\cos \theta, \sinh k(r)\sin \theta, h(r))\nonumber
 \end{equation}
with (i) $\Delta k=0$ or (ii) $K=0$  is biharmonic if and only if it is minimal.
\end{theorem}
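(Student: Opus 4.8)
The plan is to mirror the spherical arguments of Theorems \ref{To1} and \ref{To2}, using the biharmonic characterization of Theorem \ref{H1} and letting Theorem \ref{MT} play the role that the cylinder $S^1(1/\sqrt2)\times\r$ played in the spherical case. I would work on the open set where the immersion is non-degenerate, so that $k\neq 0$ and hence $\coth k\neq 0$, and I would first record the ingredients needed: for a radial function $u=u(r)$ one has $\Delta u=u''+(\ln\sinh k)'u'=u''+k'\coth k\,u'$; the principal curvatures are $\lambda_1=h''/k'$ and $\lambda_2=h'\coth k$; and the arclength relation $k'^2+h'^2=1$ gives $h'h''=-k'k''$. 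Note that the backward implication is immediate, since every minimal immersion is biharmonic, so only the forward direction requires work.

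For part (i), I would assume the surface is biharmonic with $\Delta k=0$. Then $\Delta^2 k=\Delta(\Delta k)=0$, so the first equation of Theorem \ref{H1} collapses to
\[
-2\coth k(\coth^2 k-1)k'^2+(1+\coth^2 k)\coth k=0 .
\]
Dividing by $\coth k\neq 0$ and using $\coth^2 k-1=1/\sinh^2 k$ and $1+\coth^2 k=\cosh 2k/\sinh^2 k$ gives $k'^2=\frac12\cosh 2k$. On the other hand, $(k'\sinh k)'=\sinh k\,\Delta k$, so $\Delta k=0$ is equivalent to $k'\sinh k=C$ for a constant $C$. If $C\neq 0$, substituting $k'=C/\sinh k$ into $k'^2=\frac12\cosh 2k$ and using $\cosh 2k=1+2\sinh^2 k$ yields a quadratic equation for $\sinh^2 k$ with constant coefficients; hence $\sinh^2 k$ is constant, forcing $k'\equiv 0$ and contradicting $C\neq 0$. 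Therefore $C=0$, so $k$ is constant and the angle function $\cos\alpha=k'$ is constant. Since by Theorem \ref{MT} there is no proper biharmonic constant angle surface in $H^2\times\r$, the surface is minimal.

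For part (ii), I would use the Gauss equation in $H^2\times\r$, namely $K=\lambda_1\lambda_2-\cos^2\alpha$. With $\lambda_1\lambda_2=(h''/k')(h'\coth k)=-k''\coth k$ (via $h'h''=-k'k''$) and $\cos^2\alpha=k'^2$, the hypothesis $K=0$ becomes $k''\cosh k+k'^2\sinh k=0$, i.e. $(\sinh k)''=0$, so $\sinh k=Ar+B$. If $A=0$ then $k$ is constant, the angle function is constant, and Theorem \ref{MT} again forces minimality. If $A\neq 0$, then $k'=A/\cosh k$, and I would compute $\Delta k$ and $\Delta^2 k$ as rational expressions in $\sinh k$ and $\cosh k$, substitute them into the first equation of Theorem \ref{H1}, and clear denominators to obtain, exactly as in the proof of Theorem \ref{To2}, a polynomial identity $p(r)\equiv 0$ whose leading coefficient is a nonzero power of $A$; this contradiction rules out $A\neq 0$ and completes the proof.

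The one genuinely delicate point — the analogue of the step that produced the cylinder in the spherical case — is the constant-$k$ branch: here it does \emph{not} yield a proper biharmonic surface, because a vertical cylinder over a geodesic circle of $H^2$ has nonzero constant mean curvature while no geodesic circle of $H^2$ is biharmonic, which is precisely why both cases must be closed by invoking Theorem \ref{MT} rather than an $H^2$-cylinder classification. I expect the only remaining obstacle to be computational, namely the verification in part (ii) that the leading coefficient of $p(r)$ is indeed a nonzero multiple of a power of $A$; this is routine but tedious, as in Theorem \ref{To2}.
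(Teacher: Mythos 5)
Your proposal is correct and is essentially the argument the paper intends: the paper gives no separate proof of Theorem \ref{H2}, stating only that ``similar arguments apply'' to those of Theorems \ref{To1} and \ref{To2}, and your adaptation gets the hyperbolic analogues right (the metric $dr^2+\sinh^2k\,d\theta^2$, the sign change $K=\lambda_1\lambda_2-\cos^2\alpha$, the absence of a cylinder branch via Theorem \ref{MT}, and the reduction of $K=0$ to $(\sinh k)''=0$ followed by the polynomial contradiction, whose leading term after clearing denominators indeed comes from $(1+\coth^2k)\coth k$ and is a nonzero multiple of $A^{10}$). The only cosmetic remark is that in case (i) the branch $C=0$ forces $k'\equiv0$, which already contradicts $k'^2=\tfrac12\cosh 2k\ge\tfrac12$, so the appeal to Theorem \ref{MT} there, while valid, is not needed.
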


\end{document}